\newtheorem{theorem}{Theorem}
\newcounter{lemma_}
\newtheorem{lemma}[lemma_]{Lemma}
\newcounter{proposition_}
\newtheorem{proposition}[proposition_]{Proposition}
\newcounter{corollary_}
\newtheorem{corollary}[corollary_]{Corollary}
\newcounter{remark_}
\newtheorem{remark}[remark_]{Remark}
\newenvironment{proof2}[1][Proof]{\textbf{#1.} }{\ \rule{0.5em}{0.5em}}
\newcommand{\fixed@sra}{$\vrule height 2\fontdimen22\textfont2 width 0pt\shortrightarrow$}
\newcommand{\shortarrow}[1]{%
	\mathrel{\text{\rotatebox[origin=c]{\numexpr#1*45}{\fixed@sra}}}
}
\title{\Large \bf Optimal sampling schedules for $h_2$ and $h_\infty$ state-feedback control}
\author{Duarte J. Antunes and Jo\~ao P. Hespanha
\thanks{Duarte J. Antunes is with the Control Systems Technology Group, Dep. of Mechanical Eng., Eindhoven University of Technology, the Netherlands. {\tt\small d. antunes@tue.nl}. Jo\~ao P. Hespanha is with the Dep. of Electrical and Computer Eng., University of California, Santa Barbara, CA 93106-9560, USA. {\tt\small hespanha@ece.ucsb.edu}.  }} 
\begin{document}

\maketitle
\thispagestyle{empty}
\pagestyle{empty}

\begin{abstract}
	We consider a discrete-time linear system for which the control input is updated at every sampling time, but the state is measured at a slower rate. We allow the state to be sampled according to a periodic schedule, which dictates when the state should be sampled along a period.
Given a desired average sampling interval, our goal is to determine sampling schedules that are optimal in the sense that they minimize the $h_2$ or the $h_\infty$ closed-loop norm, under an optimal state-feedback control law. Our results show that, when the desired average sampling interval is an integer, the optimal state sampling turns out to be evenly spaced. This result indicates that, for the $h_2$ and $h_\infty$ performance metrics, there is relatively little benefit to go beyond constant-period sampling.
\end{abstract}
\section{Introduction}
\par The standard paradigm in digital control is to periodically sample the system's output, compute the control action, and update the system's input. Digital to analog and analog to digital converters often dictate these operations to occur at evenly spaced times, even if, occasionally, at different rates~\cite{cuenca:18}.  However, when, for example, the controller and sensor processing units ran on shared processors or control signals are transmitted over shared networks, different sensor and control update schedules are imposed or can be selected~\cite{cervin1}. Here we address how to optimize the sampling schedules.% assuming the control update rate is fixed. 
\par We consider a discrete-time linear system for which the control input is updated at every time step and the state is sampled according to an arbitrary periodic schedule; each schedule is characterized by the intervals between consecutive samples in a period $h$. The cost of a sampling schedule is measured by the $h_2$ or $h_\infty$ closed-loop system norms under an optimal control law. We tackle the problem of picking optimal sampling schedules with a desired rational average sampling interval. %In the $h_2$ control setting, closely related to Linear Quadratic Gaussian Control (LQG), performance is measured by the average of the norm of linear performance output when the inputs is white noise.  In the $h_\infty$ setting, disturbances are assumed to be deterministic and performance is defined through the $\ell_2$ gain. This gain captures the smallest attenuation bound from the disturbance input to an linear output.
 \par In the $h_2$ framework, we start by establishing a key result stating that the expected value of a quadratic cost in the interval between two samples is a convex function  of the length of the interval (in a natural sense for functions with discrete domains). Moreover, we  show that the $h_2$ norm can be written as a weighted average of samples of this convex function at the lengths of the intervals characterizing the periodic schedule. These two facts lead to a simple way to modify a schedule in order to decrease the associated $h_2$ norm: take two arbitrary intervals characterizing the schedule and reduce the largest by the same amount that the smallest is increased. This implies that:
 \begin{itemize}
 	\item[1)] An $h$-periodic schedule with $m$ intervals between sampling is optimal if (and only if under mild conditions) all of these intervals are either equal to $\lfloor \frac{h}{m}\rfloor$ or  $\lceil \frac{h}{m}\rceil$ where $\lfloor \cdot \rfloor$, $\lceil \cdot \rceil$ denote the floor and the ceil.
 \item[2)] When $\frac{h}{m}$  is an integer, then evenly sampled sampling is optimal and, in fact, it is the unique optimal schedule (under mild assumptions).
 \item[3)] The plot optimal achievable $h_2$ norm versus  average rate is a continuous piece-wise affine function connecting the pairs $(1/h,J_{2,h})$ where $J_{2,h}$ is the $h_2$ norm of periodic control with integer average sampling interval $h$ (see Figure~\ref{fig:3} below).
 \end{itemize}   The results are briefly connected to  existing results in the literature for continuous-time sampled data systems (see Remarks~\ref{rem:1},~\ref{rem:2} below). 
 \par In the $h_\infty$ framework,  we show that the $h_\infty$ norm only depends on the longest interval in a schedule, and it is a non-decreasing function of this longest interval. This implies that:
 \begin{itemize}
 \item[1)] A sampling schedule guarantees the smallest attenuation bound ($h_\infty$ norm) achievable for a given rational average sampling time $\frac{h}{m}$ if (and only if under mild assumptions) the largest interval does not exceed $\lceil \frac{h}{m}\rceil $. Note that the optimal schedules (in this sense) are in general different from the ones for the $h_2$ case. %The schedule with smallest average rate that guarantees this attenuation bound corresponds to evenly space sampling. 
 \item[2)] Also here, when $\frac{h}{m}$ is an integer, then evenly sampled sampling is optimal and it is the unique optimal schedule (under mild assumptions).
 \item[3)] The plot optimal achievable $h_\infty$ norm versus average sampling time (or average rate) is a discontinuous piece-wise constant function, where these constants are equal to the $h_\infty$ norms corresponding to evenly spaced sampling with integer average sampling interval $h$ (see Figure~\ref{fig:4} below).
% \item[4)] Under mild conditions we can have schedules corresponding to arbitrarily poor $h_\infty$ norm and maximum sampling rate.
 \end{itemize}
 \par A numerical example illustrates the results.
\par There are some related results in the literature, reviewed next. The co-design of the control and scheduling of tasks has been proposed in several papers, see e.g.,~\cite{cervin2}. The superiority of evenly spaced sampling in the context of continuous-time output feedback sampled linear systems has been established in~\cite{mirkin1},  both in the $H_2$ and $H_\infty$ senses, using arguments based on the Youla parameterization.  The results in~\cite{mirkin3}, a discrete-time extension of~\cite{mirkin1} considering the $h_\infty$ setting,  do not explicitly handle evenly spaced sampling. In the $h_2$ setting, the impact of the variability of the sampling sequence has been studied in~\cite{mirkin2}, which implies the optimality of evenly spaced sampling for sampled data systems. However, note that our results are different from the ones in~\cite{mirkin3},~\cite{mirkin2}. In particular, in the $h_2$ setting,  neither  the above mentioned convexity properties nor the comparison between arbitrarily schedules differing by two intervals  appear in~\cite{mirkin2}. In particular, while~\cite{mirkin2} shows that the $h_2$ norm is not necessarily a monotone function of the variance of the sampling intervals, from the properties established here we can provide a simple method to find sampling schedules that monotonically improve the $h_2$ norm (see Remark~\ref{rem:final} below).  In the $h_\infty$ sense besides providing a discrete-time result analogous to the continuous-time provided in~\cite{mirkin1}, we address some of its implications not addressed in the literature. Besides the tools we use to derive our results are very different from the tools used in~\cite{mirkin1},~\cite{mirkin2},~\cite{mirkin3}. While a convexity property continuous-time version for the $h_2$ periodic control cost has been established in~\cite{behnam:18}, the property is different from the one given here as will be clarified in the sequel.  The paper~\cite{antunes:2023} goes beyond the present case for the $h_\infty$ problem and searches for state-dependent (event-triggered) scheduling policies that can outperform periodic control. Some of the results in the present paper, which considers only the periodic time-triggered case, are used in~\cite{antunes:2023}.  There are also results that advocate the use of aperiodic sampling, such as~\cite{hadi:19} and~\cite{bini}.  In~\cite{hadi:19} an aperiodic sampling scheme is proposed that guarantees at least the same attenuation bound of evenly-spaced sampling for \textit{ a finite horizon} problem. However, this aperiodic sampling scheme is shown to converge to periodic control (see~\cite[Lemma 1]{hadi:19}), so that there is no contradiction with the result presented here. See~\cite{mirkin1} for an explanation of why the results in~\cite{bini} do not contradict the superiority of evenly spaced sampling  in the average cost sense considered also here.

\par The paper is organized as follows. Section~\ref{sec:2} states the problem and Sections~\ref{sec:3},~\ref{sec:4} provide the main results pertaining to $h_2$ and $h_\infty$ respectively.  Section~\ref{sec:5} provides numerical examples and Section~\ref{sec:6} gives concluding remarks. The proofs of some auxiliary results are given in the appendix.
\section{Problem formulation and Problem Statement}\label{sec:2}
%\par This section starts by defining the plant, attenuation bound, and transmission rate in Section~\ref{subsec:A}.  Section~\ref{subsec:B} characterizes the optimal attenuation bound for periodic control. The problem considered here is stated in Section~\ref{subsec:C}.
%\subsection{Plant, attenuation bound, transmission rate}\label{subsec:A}
\par Consider a linear system 
\begin{equation}\label{eq:sys}
	x_{t+1} = Ax_t+B_2u_t+B_1w_t, \ \ t\in \mathbb{N}_0:=\mathbb{N} \cup \{0\},
\end{equation}
with the following output of interest
\begin{equation}\label{eq:output}
	z_{t} = C_2 x_t+D_{21}u_t,
\end{equation}
where $x_t \in \mathbb{R}^{n_x}$, $u_t \in \mathbb{R}^{n_u}$, $z_t \in \mathbb{R}^p$, $w_t \in \mathbb{R}^{n_w}$ for $t\in \mathbb{N}_0$. Without loss of generality, we assume that  $C_2^\top D_{21}=0$, and define $Q:=C_2^\top C_2$ and $R = D_{21}^\top D_{21}$. Futhermore, we consider $B_1=I$ and use the notation $B=B_2$. We assume that 
	$(A,B)$ is controllable, $(A,C_2)$ is observable and $R>0$.  
\par We assume that the sensors provide the full state $x_t$. However, not necessarily at every time $t$. In fact, we assume the following measured output
$$ y_t = \left\{\begin{aligned}
	& x_t, \text{ if }\sigma_t=1, \\ 
	& \emptyset \text{ if }\sigma_t=0, \\
\end{aligned}\right.$$
where $\sigma_t$ is a \textit{periodic} binary function with period $h\in\mathbb{N}$ and $y_t=\emptyset$ means that the state is not available at time $t$.  When
\begin{equation}\label{eq:sch}
	\sigma_t=\left\{\begin{aligned}
		&1 \text{ if }t \text{ is zero or an integer multiple of }h,\\
		&0 \text{ otherwise. }
	\end{aligned}\right.
\end{equation}
we have \textit{evenly spaced sampling}. Let  $s_\ell$ be the sampling times defined by $s_{\ell+1}=s_\ell+\tau_\ell$, $s_0=0$ with 
$\tau_\ell = \min\{j \in \mathbb{N}|\sigma_{s_\ell+j}=1\}$. With a periodic schedule, the sampling intervals  $\tau_\ell$ eventually repeat themselves, i.e.,  $\tau_\ell$ = $\tau_{\ell+j}$ for some $j>0$. Note that the first sampling intervals $\mathcal{T}:=(\tau_0,\dots,\tau_{j-1})$, characterize the periodic schedule. Let the average sampling interval be denoted by $h= \frac{1}{j}(\sum_{\ell=0}^{j-1}\tau_\ell)$ and the average rate be denoted by 
$ r=\frac{1}{h}$.  Note that both $r$ and $h$ are rational numbers.  Examples of periodic sampling schedules with the same average rate  are $(\sigma_0,\sigma_1,\sigma_2,\dots,) = (1,0,0,1,0,0,1,0,0,1,0,0,\dots)$ and  $(\sigma_0,\sigma_1,\sigma_2,\dots,) = (1,0,1,0,0,0,1,0,1,0,0,0,\dots)$. The control input $u_t$ can be updated at every time $t$ as a function of the information set $ \mathcal{J}_t = \{x_r|r\in \{0,\dots,t\}, \sigma_r=1\}$, 
that is,  $ u_t = \mu_{u,t}(\mathcal{J}_t)$ for some functions $\mu_{u,t}$.  Performance is measured by either the $h_2$ norm or the $h_\infty$ norm.
 %Note that if $u_t=0$ when $x_t=0$ (as usual), and $w$ is not identically zero (otherwise~\eqref{eq:inequality} is trivially met), $x_{1}=w_0$ can be seen as an initial condition.%, where $k$ is the first $t \in \mathbb{N}_0$ for which $w_t$ is non-zero.
%\par In order to find a control policy for $u_k$ such that~\eqref{eq:inequality} holds, we need to define the information sets for the controller and the disturbance input generator. 
\par The $h_2$ norm is defined as follows. Assume that $\{w_k|k\in \mathbb{N}\cup\{0\}\}$ is a sequence of zero-mean independent and identically distributed random variables with $\mathbb{E}[w_kw_k^\intercal]=W\geq 0$. Then the $h_2$ norm is defined as the average cost 
\begin{equation}\label{eq:sys3} J_2=\lim_{T\rightarrow \infty} \frac{1}{T}\mathbb{E}[\sum_{t=0}^{T-1}\|z_t\|^2]\end{equation}
We use the notation $J_{2,h}$ to denote $J_2$ for evenly spaced sampling schedules~\eqref{eq:sch}.

 To define the $h_\infty$ norm let  $w=(w_0,w_1,w_2,\dots)$, $z=(z_0,z_1,z_2,\dots)$,  define the inner product $\langle w,z\rangle=\sum_{t=0}^\infty w_t^\top  z_t$ and norm
$\|w\|^2:=\sqrt{\langle w,w\rangle}$, and let $\ell_2$ be the Hilbert space of sequences with bounded norm. The system provides an attenuation bound $\gamma$ from the input disturbances to the output of interest if %for every bounded-norm sequence $\|z\|^2\leq \gamma^2\|w\|^2$ when the initial conditions are zero, or equivalently 
\begin{equation}\label{eq:inequality}
	\|z\|^2\leq \gamma^2\|w\|^2, \ \ \forall w \in \ell_2, \text{ for } x_0=0.\ \ \  
\end{equation}
We are interested in ensuring that~\eqref{eq:inequality} holds for the smallest possible $\gamma$. The initial condition $x_0$ may be non-zero provided that we redefine~\eqref{eq:inequality} along the lines discussed, e.g., in~\cite{basar:91}. The disturbances depend on the information set  $ \mathcal{I}_t = \{x_r|r\in \{0,\dots,t\}\}$
that is $ w_t = \mu_{w,t}(\mathcal{I}_t)$, for some functions $ \mu_{w,t}$.  We define $\pi_u = (\mu_{u,0}, \mu_{u,1},\dots)$ as the policy of the controller and $\pi_w = (\mu_{w,0}, \mu_{w,1},\dots)$ as the policy of the disturbances. Then, for a given periodic sampling sequence characterized by $\mathcal{T}$, the $h_\infty$ norm coincides with the smallest attenuation bound and is given by 
\begin{equation}\label{eq:gammah_3}	\begin{aligned}
		\gamma_{\mathcal{T}}:= \inf\{\gamma| \exists \pi_u & \text{ such that }\eqref{eq:inequality}\text{ holds when the scheduler} \\ & \text{ is $h$-periodic with sampling intervals $\mathcal{T}$}\}.
	\end{aligned} 
\end{equation} 
For evenly spaced sampling~\eqref{eq:sch} we use the alternative notation
\begin{equation}\label{eq:gammah_}	\begin{aligned}
		\gamma_h:= \inf\{\gamma| \exists \pi_u & \text{ such that }\eqref{eq:inequality}\text{ holds} \\ & \text{ when the scheduler is given by~\eqref{eq:sch}}\}.
	\end{aligned} 
\end{equation}
\noindent Naturally if $\tau_i=h$ for every $i\in \{0,\dots,j-1\}$, we have 
$ \gamma_h = 	\gamma_{\mathcal{T}}.$
% Figure~\ref{fig:1} illustrates the problem setting. 
\par A sampling schedule characterized by $\tau^1_0,\dots,\tau^1_{j_1-1}$ for some $j_1$ is said to be (strictly) superior to another sampling schedule $\tau^2_0,\dots,\tau^2_{j_2-1}$  in the $h_2$ ($h_\infty$) sense if the corresponding optimal controller achieves a non-larger (strictly smaller) $h_2$ ($h_\infty$) cost. It is said to be optimal if there is not a different strictly superior schedule.
 \par We are interested in finding optimal sampling schedules with average sampling interval $\frac{h}{m}$ in the $h_2$ and $h_\infty$ sense.% In particular we shall establish that evenly spaced sampling is superior in the class of periodic samplers with average-intersampling time $h$, for arbitrary $h$, in the sense that the $h_2$ and $h_\infty$ norms are smaller than those obtained with any other.

\begin{figure}
	\centering
	\includegraphics[width=6cm]{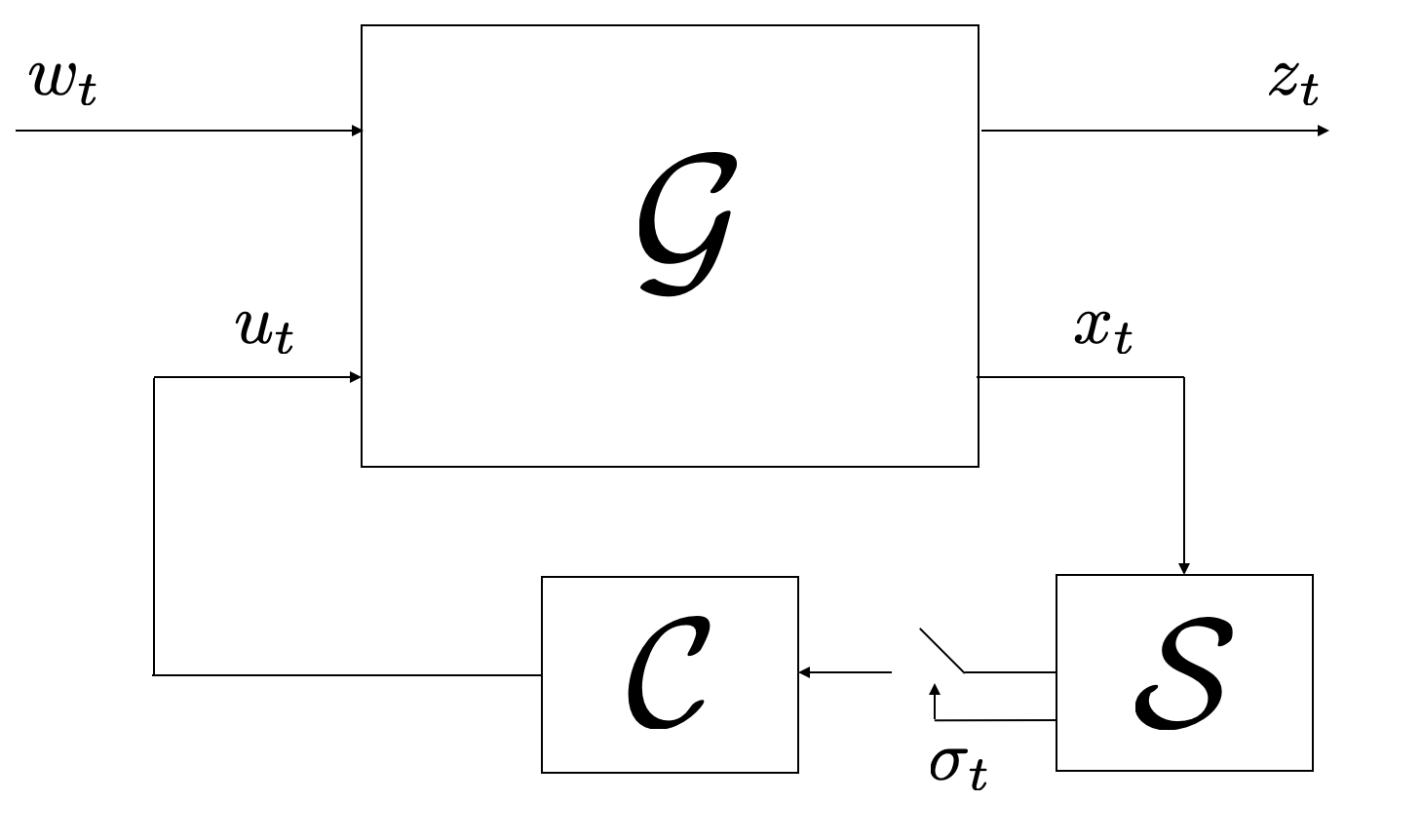}
	\caption{A non-standard sampled feedback ystem consists of a controller $\mathcal{C}$ and a periodic scheduler $\mathcal{S}$, which sends measurement/state data to the controller;  $\mathcal{G}$ represents the plant.}\label{fig:1}%, $u_k$, $x_k$, $\sigma_k$ are the control input, state, and transmission decision, respectively, at time $k$. The disturbances input is $w_k$ and the  performance output is $z_k$.}\label{fig:1}
\end{figure} 

\section{Main results for $h_2$ control}\label{sec:3}
\par The following standard result shows that the optimal $h_2$ norm, associated with the optimal controller can be written in terms of a key function $\beta$. Let $P$ be the unique positive definite (since $Q$ is positive definite) solution to
\begin{equation*}\label{eq:riccati}
	\begin{aligned}
		P&=A^\intercal PA +Q -A^\intercal PB(R+B^\intercal PB)^{-1}B^\intercal PA.
	\end{aligned}
\end{equation*}
and let 	$Z=A^\intercal PB(B^\intercal P B+R)^{-1}B^\intercal PA$ and $K=-(B^\intercal P B+R)^{-1}B^\intercal PA$. Let $\text{tr}(X)$ denote the trace of a matrix $X$.
\begin{proposition}\label{prop:1}
		The optimal cost~\eqref{eq:sys3} is given by
		\begin{equation}\label{eq:costper}
		J_{2}=	\text{tr}(PW)+\frac{1}{h}(\sum_{\ell=0}^{j-1} \beta(\tau_\ell))
		\end{equation}
		where $\beta(1)=0$ and, for $p>1$,
		\begin{equation}\label{eq:g}
			\beta(p) = \text{tr}\big(Z (\sum_{s=1}^{p-1}Y(s))\big),
		\end{equation} 
		where \mbox{$Y(s) = \sum_{r=0}^{s-1} A^r W A^\intercal{}^r $}, for \mbox{$s\in \mathbb{N}$}. Moreover, an optimal control policy that minimizes~\eqref{eq:sys3}  is
		\begin{equation}\label{eq:controller1}
			\begin{aligned} u_k = K \hat{x}_{k|k}\end{aligned}
		\end{equation}
		where
		$$ \hat{x}_{k+1|k} = A \hat{x}_{k|k} + Bu_k$$
		\begin{equation*}\label{eq:controller2} \hat{x}_{k|k} = \left\{\begin{aligned}
				   &  \hat{x}_{k|k-1}\text{, if }\sigma_k=1,  \\ 
				   &  \hat{x}_{k|k-1}\text{, if }\sigma_k=0.
			\end{aligned}\right.
		\end{equation*}
		\begin{flushright}
			$\square$
		\end{flushright}
\end{proposition}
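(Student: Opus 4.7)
The plan is to combine the Riccati-based completion-of-squares used in standard discrete-time LQR with a certainty-equivalence argument that exploits the fact that, at each sampling instant, the full state is observed without noise. First, I would use the Riccati equation to rewrite the per-step running cost as
\begin{equation*}
x_t^\top Q x_t + u_t^\top R u_t = (u_t - K x_t)^\top (B^\top P B + R)(u_t - K x_t) + x_t^\top P x_t - \mathbb{E}[x_{t+1}^\top P x_{t+1} \mid x_t, u_t] + \mathrm{tr}(PW),
\end{equation*}
so that, after taking expectations, summing over $t = 0, \dots, T-1$, and dividing by $T$, the $x_t^\top P x_t$ terms telescope into a boundary term that vanishes as $T \to \infty$ for any mean-square stabilizing policy, the constant reproduces $\mathrm{tr}(PW)$, and only a residual quadratic in $u_t - K x_t$ remains.

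Second, since $u_t$ is constrained to be $\mathcal{J}_t$-measurable and $B^\top PB + R > 0$, each residual term is minimized pointwise in $t$ by the conditional expectation $u_t = K\,\mathbb{E}[x_t \mid \mathcal{J}_t]$, which we denote $K \hat{x}_{t|t}$; this both identifies the optimal controller~\eqref{eq:controller1} and reduces the minimum residual, via the identity $K^\top (B^\top PB + R)K = Z$, to $\mathrm{tr}\bigl(Z\,\mathbb{E}[e_t e_t^\top]\bigr)$ where $e_t := x_t - \hat{x}_{t|t}$ is the estimation error.

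Third, because $\sigma_{s_\ell} = 1$ forces $e_{s_\ell} = 0$, the inter-sample recursion gives $e_{s_\ell+k} = \sum_{r=0}^{k-1} A^r w_{s_\ell+k-1-r}$, so $\mathbb{E}[e_{s_\ell+k} e_{s_\ell+k}^\top] = Y(k)$. Summing $\mathrm{tr}(Z Y(k))$ over $k = 0, \dots, \tau_\ell - 1$ then reconstructs $\beta(\tau_\ell)$ (the $k=0$ term vanishes, matching $\beta(1) = 0$), and summing over one full schedule period and dividing by its length $h$ delivers the weighted-average formula~\eqref{eq:costper}.

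The main subtlety, rather than a deep obstacle, is justifying the vanishing of the boundary term $\mathbb{E}[x_T^\top P x_T]/T$: under $u_t = K\hat{x}_{t|t}$ the closed loop for the pair $(\hat{x}_{t|t}, e_t)$ decomposes into an estimator driven by the stabilizing gain $A+BK$ and an error process that is reset to zero at every sample time, so the joint process is mean-square bounded; any non-stabilizing policy yields $J_2 = \infty$ and can be dismissed from the minimization.
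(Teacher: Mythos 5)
Your proposal is correct and follows essentially the same route as the paper: the paper invokes the standard LQG separation result (citing Bertsekas) to obtain the decomposition into $\text{tr}(PW)$ plus $\sum_t \text{tr}(Z\Phi_t)$ with the error covariance $\Phi_t$ resetting to zero at sampling times and growing as $Y(k)$ in between, which is exactly what you derive explicitly via completion of squares and certainty equivalence. The only difference is that you spell out the steps the paper delegates to the cited reference.
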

\par The proof is given in the appendix.

\par A function with discrete domain $f(i)$, $i\in\mathbb{N}$, is said to be convex if 
\begin{equation}
	f(i)\leq \frac{f(i+1)+f(i-1))}{2}, \ \ \  \forall i\in\mathbb{N}\setminus\{1\}.
\end{equation}
\par A key result of the present paper is the following observation that $\beta$ is convex.

\begin{theorem}
	The function $\beta(i)$, $i\in\mathbb{N}$, defined in~\eqref{eq:g}, is convex.
\end{theorem}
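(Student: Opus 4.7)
The plan is to reduce convexity to a non-negativity statement about a single trace, and then to exploit positive semidefiniteness of the two matrices appearing inside the trace.

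First I would rewrite the definition so that the first forward difference has a clean form. Since $\beta(1)=0$ and $\beta(p)=\operatorname{tr}\!\bigl(Z\sum_{s=1}^{p-1}Y(s)\bigr)$ for $p>1$, a direct computation gives, for every $p\ge 1$,
\begin{equation*}
\Delta\beta(p):=\beta(p+1)-\beta(p)=\operatorname{tr}\!\bigl(Z\,Y(p)\bigr),
\end{equation*}
where the case $p=1$ works because of the convention $\beta(1)=0$ and $Y(1)=W$. Taking a second forward difference and using the telescoping identity $Y(p)-Y(p-1)=A^{p-1}WA^{\top\,p-1}$ yields, for every $i\ge 2$,
\begin{equation*}
\beta(i+1)-2\beta(i)+\beta(i-1)=\operatorname{tr}\!\bigl(Z\,A^{i-1}W A^{\top\,i-1}\bigr).
\end{equation*}
Thus proving convexity reduces to showing that the right-hand side is non-negative for every $i\ge 2$.

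Next I would argue that both matrices in the product are positive semidefinite. The matrix $W$ is PSD by assumption, so $A^{i-1}W A^{\top\,i-1}$ is PSD as a congruence of a PSD matrix. For $Z$, observe that by its definition $Z=A^{\top}PB(B^{\top}PB+R)^{-1}B^{\top}PA$; since $R>0$ and $P>0$ (guaranteed by the observability/controllability assumptions and $Q,R>0$ stated earlier), the middle factor $(B^{\top}PB+R)^{-1}$ is positive definite, and $Z$ therefore has the form $M^{\top}N M$ with $N\succ 0$, hence $Z\succeq 0$. Finally, the trace of a product of two PSD matrices is non-negative (for instance, $\operatorname{tr}(Z X)=\operatorname{tr}(Z^{1/2}X Z^{1/2})\ge 0$ when $Z,X\succeq 0$), which finishes the argument.

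There is no real obstacle: the only point requiring a small check is that the single-formula expression $\Delta\beta(p)=\operatorname{tr}(ZY(p))$ is valid at the boundary $p=1$, where the sum in the definition of $\beta$ is empty; this is immediate from $\beta(1)=0$ and $Y(1)=W$. Everything else is a one-line PSD observation combined with the telescoping identity for $Y(s)$.
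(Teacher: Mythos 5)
Your proposal is correct and follows essentially the same route as the paper's proof: compute the first difference $\beta(i+1)-\beta(i)=\operatorname{tr}(ZY(i))$, take the second difference via the telescoping identity for $Y$, and conclude non-negativity. You additionally supply the justification that $Z\succeq 0$ and that the trace of a product of PSD matrices is non-negative (which the paper leaves implicit), and your second-difference formula $\operatorname{tr}\bigl(Z\,A^{i-1}WA^{\top\,i-1}\bigr)$ is the correct one, whereas the paper's displayed expression omits the factor $Z$ and shifts the exponent, apparently a typo.
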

\begin{proof}
	We have, for $i\in\mathbb{N}$,	$\beta(i+1)-\beta(i) = \text{tr}(ZY(i))$	and, for $i\in\mathbb{N}\setminus\{1\}$
	$$(\beta(i+1)-\beta(i))-(\beta(i)-\beta(i-1)) =\text{tr}(A^iWA^\intercal{}^i) \geq 0. $$
\end{proof}
\par The convexity of $\beta$ is used in the next theorem to improve upon a periodic schedule, by replacing two of its sampling intervals by two alternative sampling intervals that are closer to their means.

\begin{theorem}\label{th:keyh2}
	Consider a given periodic schedule with period $h$ and characterized by $j$ intervals $\mathcal{T}= (\tau_0,\dots,\tau_{j-1})$. Given any two distinct $\tau_{i}>\tau_{l}$ construct a modified schedule  $\bar{\mathcal{T}}= (\bar{\tau}_0,\dots,\bar{\tau}_{j-1})$ with 
	$$\bar{\tau}_{\ell}= \left\{\begin{aligned} &
	\tau_{i}-p, \text{ if }\ell=i \\ &  \tau_{j}+p, \text{ if }\ell=l  \\ & \tau_\ell, \text{ otherwise}.	\end{aligned}\right.$$
 for some $p \in \{0,\dots,\lfloor (\tau_{i}-\tau_{l})/2 \rfloor\}$.	Then, $\bar{\mathcal{T}}$ is superior to $\mathcal{T}$ in the $h_2$ sense. %the $h_2$ norm associated with the modified schedule is not larger than the $h_2$ norm associated with the original schedule.  
\end{theorem}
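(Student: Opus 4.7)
The plan is to reduce the comparison of the two schedules to a statement purely about the function $\beta$ at two integer points, and then exploit the convexity established in the previous theorem.

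First, I would invoke Proposition~\ref{prop:1} to write the $h_2$ cost of any $h$-periodic schedule $\mathcal{T}=(\tau_0,\dots,\tau_{j-1})$ as $J_2(\mathcal{T}) = \text{tr}(PW) + \frac{1}{h}\sum_{\ell=0}^{j-1}\beta(\tau_\ell)$. Since $\bar{\mathcal{T}}$ agrees with $\mathcal{T}$ in every coordinate except the two distinguished ones, and since both schedules share the same period $h$ (because $p$ is subtracted from one interval and added to another), all but two terms cancel in the difference:
\begin{equation*}
J_2(\mathcal{T}) - J_2(\bar{\mathcal{T}}) = \frac{1}{h}\bigl[\beta(\tau_i) + \beta(\tau_l) - \beta(\tau_i-p) - \beta(\tau_l+p)\bigr].
\end{equation*}
It thus suffices to show that, under the hypothesis $\tau_i>\tau_l$ and $0\le p\le \lfloor(\tau_i-\tau_l)/2\rfloor$, the bracketed quantity is nonnegative.

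Next I would rewrite convexity of $\beta$ (Theorem~1) in its equivalent ``non-decreasing forward differences'' form: letting $\Delta\beta(k):=\beta(k+1)-\beta(k)$ for $k\in\mathbb{N}$, the inequality defining discrete convexity is exactly $\Delta\beta(k-1)\le \Delta\beta(k)$ for all $k\in\mathbb{N}\setminus\{1\}$, so $\Delta\beta$ is monotonically non-decreasing on $\mathbb{N}$.

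The remaining step is a short induction on $p$. For $p=0$ the claim is trivial. Assuming $\beta(\tau_i-p)+\beta(\tau_l+p)\le \beta(\tau_i)+\beta(\tau_l)$ for some $p$ with $p+1\le \lfloor(\tau_i-\tau_l)/2\rfloor$, I would prove the same inequality for $p+1$ by showing the single-step decrement
\begin{equation*}
\beta(\tau_i-p-1)+\beta(\tau_l+p+1) \le \beta(\tau_i-p)+\beta(\tau_l+p),
\end{equation*}
which, after rearrangement, is exactly $\Delta\beta(\tau_l+p)\le \Delta\beta(\tau_i-p-1)$. The constraint $p+1\le \lfloor(\tau_i-\tau_l)/2\rfloor$ guarantees $\tau_l+p \le \tau_i-p-1$, so this inequality follows directly from monotonicity of $\Delta\beta$. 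The only potential pitfall is checking that the arguments $\tau_i-p$ and $\tau_l+p$ remain in the valid domain of $\beta$ (i.e.\ in $\mathbb{N}$, so that the recursion $\beta(1)=0$ and~\eqref{eq:g} applies and the convexity inequality from Theorem~1 is available); since $\tau_l\ge 1$ and $p\le \lfloor(\tau_i-\tau_l)/2\rfloor$, both arguments lie in $\{1,\dots,\tau_i\}\subset\mathbb{N}$, so this is not actually an issue. Combining the reduction in the first step with the induction yields $J_2(\mathcal{T})\ge J_2(\bar{\mathcal{T}})$, establishing that $\bar{\mathcal{T}}$ is superior to $\mathcal{T}$ in the $h_2$ sense.
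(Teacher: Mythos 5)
Your proposal is correct and follows essentially the same route as the paper: reduce the cost difference via Proposition~\ref{prop:1} to the two-term inequality $\beta(\tau_i)+\beta(\tau_l)\geq\beta(\tau_i-p)+\beta(\tau_l+p)$, then exploit the non-decreasing forward differences of the convex function $\beta$. Your induction on $p$ is just the recursive form of the paper's telescoping-sum comparison, so there is no substantive difference.
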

\begin{proof} Let $J_{2}$ and $\bar{J}_{2}$ denote the optimal costs associated with the original and modified schedules, respectively. Note that both schedules have the same period denoted by $h$. Due to~\eqref{eq:costper} is suffices to prove that
\begin{equation}\label{eq:aa}J_{2}-\bar{J}_{2} = \frac{1}{h}(\beta(\tau_i)+\beta(\tau_l)-(\beta({\tau}_i-p)+\beta({\tau}_l+p)) )\geq 0.\end{equation}
Since $\beta(i)$ is convex we have, for any $p\in \mathbb{N}$,
\begin{equation}
\begin{aligned}
	&	\beta(i+p+1)-\beta(i+p)\geq \beta(i+p)-\beta(i+p-1)\geq \dots\\
	& \ \ \ \ \ \ \ \ \ \ \ \ \ \ \ \ \ \ \  \geq \beta(i+1)-\beta(i)\geq 0, \ \ \ \ \ \ \forall i\in\mathbb{N}.
\end{aligned}
\end{equation}
\par From this fact we conclude that
\begin{equation}
	\begin{aligned}
			\beta(\tau_i)-\beta(\tau_i
		-p)&= \sum_{k=0}^{p-1} \beta(\tau_i-k)-\beta(\tau_i
		-1-k)\\
	& \geq 	\sum_{k=0}^{p-1} \beta(\tau_l+p-k)-\beta(\tau_l
		+p-1-k)\\
		&  = \beta(\tau_l+p)-\beta(\tau_l)
	\end{aligned}
\end{equation}
which implies~\eqref{eq:aa} concluding the proof.
\end{proof}

\par This results has several implications given next.
\begin{corollary}\label{th:4}
An $h$-periodic schedule with $m$ intervals between sampling is optimal if $m_1$ of these intervals equal $h_1=\lfloor \frac{h}{m}\rfloor$ and $m_2$ equal $h_2=\lceil \frac{h}{m}\rceil$ where $m_1+m_2=m$ $h_1m_1+h_2m_2=h$.  Moreover, the corresponding $h_2$ norm is equal to
\begin{equation}\label{eq:Fb}
	\frac{m_1}{h} \beta(h_1) + \frac{m_2}{h}\beta(h_2)
\end{equation} 
Furthermore, if $\beta(i+1)>\beta(i)$ for every $i \in \mathbb{N}$, then  these optimal schedules are unique in the class of schedules with period $h$.
\end{corollary}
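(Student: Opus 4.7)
The plan is to derive the cost formula by direct substitution into Proposition~\ref{prop:1}, and to establish both optimality and uniqueness by iterating the swap operation of Theorem~\ref{th:keyh2}.

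The cost formula is immediate: with $m_1$ intervals equal to $h_1$ and $m_2$ equal to $h_2$, formula~\eqref{eq:costper} gives $\text{tr}(PW) + \frac{m_1}{h}\beta(h_1) + \frac{m_2}{h}\beta(h_2)$, matching~\eqref{eq:Fb} up to the constant $\text{tr}(PW)$ that is common to every schedule; in particular, every schedule of the hypothesized form realizes the same cost.

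For optimality, let $\mathcal{T}' = (\tau_0', \dots, \tau_{m-1}')$ be any competing $h$-periodic schedule with $m$ intervals summing to $h$. If $\mathcal{T}'$ is not of the described form, some interval satisfies $\tau_\ell' \leq h_1 - 1$ or $\tau_\ell' \geq h_2 + 1$, and the constraint $\sum_\ell \tau_\ell' = h$ together with $m_1 h_1 + m_2 h_2 = h$ forces the existence of two indices $i, l$ with $\tau_i' - \tau_l' \geq 2$. Applying Theorem~\ref{th:keyh2} with $p = 1$ produces a weakly superior schedule. The quantity $\sum_\ell (\tau_\ell')^2$ acts as a strict monovariant, since $(\tau_i' - 1)^2 + (\tau_l' + 1)^2 - (\tau_i')^2 - (\tau_l')^2 = -2(\tau_i' - \tau_l' - 1) \leq -2$, so the iteration terminates in finitely many steps at a schedule of the described form, and optimality follows.

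For uniqueness under the hypothesis $\beta(i+1) > \beta(i)$ for all $i$, I would rerun the iteration and argue that each individual swap produces a strict decrease in cost. The cost change at one swap equals $\frac{1}{h}[g(\tau_i' - 1) - g(\tau_l')]$, where $g(k) := \beta(k+1) - \beta(k) = \text{tr}(Z Y(k))$, and convexity of $\beta$ gives $g$ non-decreasing. The main obstacle is promoting the resulting weak inequality $g(\tau_i' - 1) \geq g(\tau_l')$ to a strict one: the natural route is to expand $g(\tau_i' - 1) - g(\tau_l') = \text{tr}\bigl(Z \sum_{r = \tau_l'}^{\tau_i' - 2} A^r W A^{\top r}\bigr)$ and argue, using the hypothesis together with the standing controllability, observability, and positive-definiteness assumptions, that this trace is strictly positive. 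I expect this bookkeeping to be the main technical delicacy, since in the abstract a strictly monotone convex function on $\mathbb{N}$ need not be strictly convex, so some additional structure tying the hypothesis back to $A$, $W$, and $Z$ will be needed to close the gap.
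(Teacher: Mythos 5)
Your derivation of the cost formula and your optimality argument are correct and follow essentially the paper's own route: the paper also observes that all schedules of the stated form share the cost given by~\eqref{eq:costper} and then invokes Theorem~\ref{th:keyh2} to modify any other schedule into that form without increasing the cost. You are actually more careful than the paper on one point: the paper simply asserts that the modification "can be done," whereas you supply an explicit termination certificate via the strictly decreasing integer quantity $\sum_\ell (\tau_\ell')^2$. Your observation that~\eqref{eq:Fb} omits the schedule-independent term $\text{tr}(PW)$ from~\eqref{eq:costper} is also accurate and harmless.

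On uniqueness, however, you have not completed the proof, and the obstacle you flag is genuine rather than mere bookkeeping. A $p=1$ swap changes the cost by $\tfrac{1}{h}\bigl[g(\tau_i'-1)-g(\tau_l')\bigr]$ with $g(k)=\text{tr}(ZY(k))$, and strictness requires $\text{tr}\bigl(ZA^rWA^{\top r}\bigr)>0$ for some $r\in\{\tau_l',\dots,\tau_i'-2\}$, i.e.\ \emph{strict convexity} of $\beta$ on the relevant range. The hypothesis $\beta(i+1)>\beta(i)$ for all $i$ is equivalent to $g(1)=\text{tr}(ZW)>0$ (i.e.\ $KW\neq 0$, since $g$ is non-decreasing), and this does not imply strict convexity: if $KA^rW^{1/2}=0$ for every $r\geq 1$ while $KW^{1/2}\neq 0$, then $\beta(p)=(p-1)\,\text{tr}(ZW)$ is affine with positive slope, the hypothesis holds, yet every $h$-periodic schedule with $m$ intervals attains the same cost and uniqueness fails. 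So the gap cannot be closed under the stated hypothesis; one needs the stronger condition $\text{tr}(ZA^{i}WA^{\top i})>0$ for the indices $i$ between $h_1$ and the extremes of the competing schedule (for instance $KA^iW^{1/2}\neq 0$ for all relevant $i$, which holds generically, e.g.\ when $W>0$ and $A$ is invertible). For what it is worth, the paper's own proof does not bridge this step either — it ends with the bare assertion that the hypothesis "leads to a contradiction" — so your diagnosis identifies a real weakness in the statement rather than a missing trick on your part.
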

\begin{proof} We can  list all possible schedules with period $h$ and $m$ sampling intervals and compute the associated $h_2$ norm. Note that all the schedules that meet the form in the present corollary have the same $h_2$ norm due to~\eqref{eq:costper}. If such an $h_2$ norm is minimal we conclude the sufficiency part. In turn, if we would have a schedule with minimal $h_2$ norm that does not take the form stated in the present corollary, due to Theorem~\ref{th:keyh2}, we could modify it without increasing the cost so that it does meet the mentioned form, so that it is equally optimal.   If $\beta(i+1)>\beta(i)$ for every $i \in \mathbb{N}$ this leads to a contradiction meaning that only the schedules of the form stated in the present corollary are optimal.
\end{proof}
\par It is immediate from~\eqref{eq:g} that a sufficient condition for $\beta(i+1)>\beta(i)$ for every $i \in \mathbb{N}$ is $KW\neq0$.
\par Corollary~\ref{th:4} implies the following:
\begin{itemize}
	\item[i)] In general there might be more than one optimal schedule, e.g., if $h=10$, $m=4$, $1010100100$ are $1001010010$ are both optimal. 
		\item[ii)]Given a desired rational average sampling time $\frac{h}{m}$, the optimal schedules are the ones that have $m_1$  intervals equal to $\lfloor \frac{h}{m}\rfloor$ and $m_2$ equal to $\lceil \frac{h}{m}\rceil$. 
		\item[(iii)] Note that we can write~\eqref{eq:Fb} as follows
		$$\frac{\beta(h_1)(h_1+1-\zeta) +\beta(h_1+1)(\zeta-h_1)}{\zeta}|_{\zeta=\frac{h}{m}}$$ 
		since $h = m_1h_1+m_2h_1+m_2$ and
		$$\begin{aligned}
			& \frac{(h_1+1-h/m)}{h/m} = \frac{\overbrace{h_1m_1+h_1m_2+m_2-h}^{=0}+m_1}{h}\\
			& \frac{(h/m-h_1)}{h/m} = \frac{\overbrace{h-h_1m_1-h_2m_2}^{m_2}}{h}
		\end{aligned}  $$
	 Rewriting~\eqref{eq:Fb}  in terms of the rate $r=1/\xi$ we have
	$$\beta(h_1)(r(h_1+1)-1) +\beta(h_1+1)(1-rh_1)|_{r=\frac{m}{h}}$$ 
		Thus, the plot optimal achievable $h_2$ norm versus  average rate is (a restriction to the rational numbers of) a continuous piecewise affine function connecting the pairs $(1/h,J_{2,h})$ where $J_{2,h}$ is the $h_2$ norm of periodic control with integer average sampling interval $h$ (see Figure~\ref{fig:3} below).
\end{itemize} 
\par Corollary~\ref{th:4} also implies the following result.
\begin{corollary}
		Evenly spaced sampling is optimal in the class of periodic schedulers with integer average sampling time $h\in\mathbb{N}$. This is the unique optimal schedule if $g(h-1)<g(h)<g(h+1)$ when $h>1$ and if $g(1)<g(2)$ when $h=1$.
		\begin{flushright}
			$\square$
		\end{flushright}
\end{corollary}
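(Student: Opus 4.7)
The plan is to derive this corollary as a direct specialization of Corollary~\ref{th:4}, augmented by a short swap argument via Theorem~\ref{th:keyh2} for the uniqueness portion.

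First I would prove optimality. Any periodic scheduler with integer average sampling time $h$ can be written as a scheduler with period $mh$ and $m$ intervals summing to $mh$, for some $m \geq 1$. Applying Corollary~\ref{th:4} to this class, one has $\lfloor (mh)/m \rfloor = \lceil (mh)/m \rceil = h$, so every interval of an optimal schedule in this $mh$-periodic subclass must equal $h$; this is exactly the evenly spaced schedule. Since $m$ is arbitrary within the average-$h$ class, evenly spaced sampling is globally optimal, and its $h_2$ cost is $\text{tr}(PW) + \beta(h)/h$ read off from~\eqref{eq:costper}.

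For uniqueness when $h = 1$, the constraint that positive integer intervals average to $1$ forces every interval to equal $1$, so only the evenly spaced schedule is admissible (and the hypothesis $\beta(1) < \beta(2)$ is not needed here). For $h \geq 2$ I would argue by contradiction: suppose some non-evenly-spaced schedule $\mathcal{T}$ also attains the optimum. Its mean being $h$ forces the existence of indices with $\tau_i \geq h+1$ and $\tau_l \leq h-1$. I would then iterate Theorem~\ref{th:keyh2} with $p=1$ on such pairs, producing a chain of modifications that terminates at the evenly spaced schedule. By Theorem~\ref{th:keyh2} each step has non-positive effect on the cost, and optimality of $\mathcal{T}$ forces every step to contribute exactly zero. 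Somewhere along the chain, however, a pair of the form $(h+1, h-1)$ is mapped to $(h, h)$, contributing the increment $\frac{1}{mh}\bigl(\beta(h+1) + \beta(h-1) - 2\beta(h)\bigr)$.

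The main obstacle is to argue that this terminal increment is strictly positive under the hypothesis $\beta(h-1) < \beta(h) < \beta(h+1)$, since plain convexity of $\beta$ only yields non-negativity. I would attack this by returning to the closed form of $\beta$ in~\eqref{eq:g}, writing the second forward difference at $h$ in terms of the Riccati-derived matrix $Z$ and the increments of $Y(s)$, and using the hypothesis to rule out the degenerate case in which this second difference vanishes. Once that algebraic step is settled, the strict inequality contradicts the assumed optimality of $\mathcal{T}$, completing the proof of uniqueness.
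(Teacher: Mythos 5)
Your optimality argument is essentially the paper's own (implicit) route: the corollary is presented as a direct specialization of Corollary~\ref{th:4}, and your reduction of an arbitrary average-$h$ schedule to a period-$mh$, $m$-interval schedule with $\lfloor mh/m\rfloor=\lceil mh/m\rceil=h$ is the right way to make that precise. Your observation that uniqueness for $h=1$ is automatic (positive integer intervals averaging to $1$ must all equal $1$) is also correct, and your reduction of uniqueness for $h\geq 2$ to the single terminal swap $(h+1,h-1)\mapsto(h,h)$ is sound: equality throughout the chain forces $\beta(h+1)+\beta(h-1)-2\beta(h)=0$.

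The gap is exactly at the step you flag as the main obstacle, and it cannot be closed the way you propose. From~\eqref{eq:g}, $\beta(i+1)-\beta(i)=\text{tr}(ZY(i))=\sum_{r=0}^{i-1}\text{tr}(ZA^rWA^{\intercal r})$, so the second difference at $h$ is the single nonnegative term $\text{tr}(ZA^{h-1}WA^{\intercal (h-1)})$, whereas the hypothesis $\beta(h-1)<\beta(h)<\beta(h+1)$ only says that the two partial sums $\sum_{r=0}^{h-2}$ and $\sum_{r=0}^{h-1}$ of these nonnegative terms are positive. Both partial sums can be positive (e.g.\ because $\text{tr}(ZW)>0$) while the top term $\text{tr}(ZA^{h-1}WA^{\intercal(h-1)})$ vanishes (e.g.\ if $B^\intercal PA^{h}W^{1/2}=0$ while $B^\intercal PAW^{1/2}\neq 0$); nothing in the hypothesis rules this out. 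In that degenerate case $\beta$ is affine with positive slope on $\{h-1,h,h+1\}$, every swap in your chain is cost-neutral, and no contradiction arises. What the argument actually needs is the strict convexity condition $\beta(h+1)-\beta(h)>\beta(h)-\beta(h-1)$, equivalently $\text{tr}(ZA^{h-1}WA^{\intercal(h-1)})>0$, which is strictly stronger than the stated monotonicity condition. To be fair, the paper supplies no proof of the uniqueness claim either --- it inherits it from the (equally unargued, and similarly monotonicity-flavoured) uniqueness condition in Corollary~\ref{th:4} --- so the obstruction you ran into reflects an imprecision in the statement itself rather than a defect peculiar to your approach; but as written your proof of uniqueness for $h\geq 2$ does not go through from the stated hypothesis.
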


\begin{remark}\label{rem:final}
	One simple method to arrive at an optimal sampling schedule as given in Corollary~\ref{th:4} from an arbitrary scheduled is to recursively reduce the largest interval by the same amount that the smallest is increased, and set this amount to the largest possible according to Theorem~\ref{th:keyh2}. The resulting sequence of sampling schedules monotonically improves the $h_2$ norm.
\end{remark}

\begin{remark}\label{rem:1}
\par The paper~\cite{antunes:16_1} considers a continous-time version of sampled data periodic control with average intersampling time $h_c\in\mathbb{R}_{>0}$. This reference provides the following expression for the cost of periodic control with average sampling period $h_c$:
\begin{equation}\label{eq:F1}
	J_c(h_c)=\delta_c+\frac{1}{h_c}\beta_c(h_c),\, \beta_c(h_c) \!=\! \int_0^{h_c}\!\!\text{tr}(Z_c\int_0^s \!V(r)dr)ds
\end{equation} 
where $ V(s)=\int_0^s e^{A_cr}W_ce^{A_c^\intercal r}$, $A_c$ is the system matrix, $W_c$ is a positive semi-definite matrix proportional to the covariance of the stochastic disturbances and the expressions for $\delta_s$ and $Z_c$ are omitted here.  Note that analogously to the discrete-time case $\beta_c$ is a convex function since $\beta_c''(h_c)=\text{tr}(Z_cV(h_c))\geq 0$. Due to this convexity property, many results of the previous section can be extended to the sampled-data case; however we do not pursue this further here.
\end{remark} 
\begin{remark}\label{rem:2}
	A different result, shown in~\cite{behnam:17}, states that if we write~\eqref{eq:F1} as a function of the average rate $r = 1/h$ such a function, $J_c(1/r) = r\beta_c(\frac{1}{r})$ is convex, Note,  however, that neither $J_c(h)$ nor $J_2(h)$ are in general convex functions of $h$.
\end{remark} 

%In fact the first derivative is given by 
%$$ \beta(\frac{1}{r})-\frac{1}{r}\beta'(\frac{1}{r}) $$
%and the second derivative is given by
%$$\frac{1}{r^2}\beta''(\frac{1}{r})$$

\section{Main results for $h_\infty$ control}\label{sec:4}

%\noindent As explained next, we can exactly compute $\gamma_h$ for every $h$ and provide a control law $\pi_u $ such that $\eqref{eq:inequality}$ holds.
\par We start by considering evenly spaced sampling~\eqref{eq:sch} and by providing a method to compute $\gamma_h$, given by~\eqref{eq:gammah_}. Let us first define three matrix transformations:
$$ \begin{aligned}
	F_a(P)&:= P+P(\gamma^2 I-P)^{-1}P \\
	F_c(P)&:= A^\top PA+Q-A^\top PB(B^\top PB+R)^{-1}B^\top PA  \\
	F_o(P)&:= A^\top PA+Q.
\end{aligned}$$
for a given $\gamma \in \mathbb{R}_{>0} $.  When $h=1$ the following iteration 	$P_{t+1}=F_c(F_a(P_t))$ 
with $P_0=0$ is monotone in the sense that $P_{t+1}\geq P_t$ converges if $\gamma^2 I>P_t$ for every $t \in \mathbb{N}_0$ to the unique positive definite solution $\bar{P}_\gamma$ of the algebraic Riccati equation 
\begin{equation}\label{eq:Pgamma} 	\bar{P}_\gamma=F_c(F_a(\bar{P}_\gamma)), \end{equation}
%\begin{equation}\label{eq:Ric}
%\end{equation}
see ~\cite{basar:91} (although the expressions in~\cite{basar:91} appear in a different but equivalent form). Due to monotonicity, $\gamma^2 I>\bar{P}_\gamma$ implies that  $\gamma^2 I>P_t$ for every $t \in \mathbb{N}_0$. Provided that this condition holds,~\eqref{eq:inequality} holds for a policy $\pi_x$ specified by $u_t= K_\gamma x_t$, 
where 
\begin{equation}\label{eq:K}
	K_\gamma= -(R+B^\top F_a(\bar{P}_\gamma)B)^{-1}B^\top F_a(\bar{P}_\gamma)A.
\end{equation}
%For future reference the optimal policy for the disturbance generator is
%$$ w_k = Lx_k$$
%where
%
If $\gamma$ is such that $\gamma^2 I \geq P_{t}$ does not hold for some $t$, then~\eqref{eq:inequality} does not hold for any $\pi_u$.
\par However, if $h>1$ the conditions on $\gamma$ for the existence of such a control policy are stricter, i.e., $\gamma$ needs to be larger~\cite{hadi:20}. They actually become stricter as $h$ increases leading to non-decreasing sequence of $\gamma_h$, as stated in the next lemma.
%$$ L = (\gamma I-P)^{-1}P(A+BK)$$
\begin{lemma}\label{lem:1}
	Suppose that $\gamma^2 I>\bar{P}_\gamma$, and consider the following iteration, with $M_1=\bar{P}_\gamma$,
	\begin{equation}\label{eq:iter}
		M_{k+1} = F_o(F_a(M_k)), \ \ \ k \in \{1,2,\dots,{h}\},
	\end{equation} 
	which can be ran as long as $\gamma^2 I-M_k$ is not singular. Then:
	\begin{itemize}
		\item[(i)]	if $\gamma^2 I\!-\!M_{k}\!>\!0$, for  all $k \!\in\! \{1,\dots,h\}$, then, for all  $k \!\in\! \{1,\dots,h-1\}$, $M_{k+1}\!\geq\! M_k$. %Moreover, if $A$ is invertible $\exists_x \in \mathbb{R}^n$: $x^\top M_{k+1}x>x^\top M_{k}x$.
		\item[(ii)] for every $h \in \mathbb{N}$, $\inf\{ \gamma|\gamma^2 I-M_h>0\}=\gamma_h$, where $\gamma_h$ is given by~\eqref{eq:gammah_}.  
		\item[(iii)] for every $h \in \mathbb{N}$, $\gamma_{h+1}\geq \gamma_h$.\hfill	$\square$
		%	\item[(iv)] for every $h \in \mathbb{N}$, $\lim_{\gamma \rightarrow \gamma_h, \gamma>\gamma_h}M_h$ exists. \hfill $\square$
	\end{itemize}
	%		\item[(iv)] If $\gamma^2 I > M_h, $\eqref{eq:inequality} holds when $\pi_u$ is specified by
	%	\begin{equation}\label{eq:u}
	%		u_k = K_\gamma \bar{x}_k,
	%	\end{equation}
	%	and the scheduler is periodic, i.e.,  specified by~\eqref{eq:sch}, where $K_\gamma$ is given by~\eqref{eq:K},
	%		\begin{equation}\label{eq:xhat} \bar{x}_k = \left\{\begin{aligned}
	%			& x_k, \ \ \text{ if }\sigma_k=1, \\ 
	%			& (A+BK_\gamma+L_\gamma) x_{k-1},  \ \ \text{ if }\sigma_k= 0,
	%		\end{aligned}\right. 
	%		\end{equation}
	%	and
	%	\begin{equation}\label{eq:L} L_\gamma := (\gamma^2 I-\bar{P}_\gamma)^{-1}\bar{P}_\gamma(A+BK_\gamma).\end{equation}
	%		\item[(v)] if $\gamma^2 I < M_k$ for some $k \leq h$  then there does not exists a control law $\pi_u$ such that~\eqref{eq:inequality} holds when the scheduler is given by~\eqref{eq:sch}.
	%	\end{itemize}
\end{lemma}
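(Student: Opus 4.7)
The plan is to treat (i) and (iii) as monotonicity consequences of the update $F_o\circ F_a$, and to obtain (ii) from the characterization of the $h_\infty$ cost under $h$-periodic sampling in~\cite{hadi:20}.

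For (i), I would begin by rewriting the disturbance map in the alternative form $F_a(P) = (P^{-1} - \gamma^{-2} I)^{-1}$, which follows from the push-through identity $P + P(\gamma^2 I - P)^{-1} P = \gamma^2 P (\gamma^2 I - P)^{-1}$ whenever $P > 0$ and $\gamma^2 I > P$. From this expression $F_a$ is visibly monotone on $\{P : 0 < P < \gamma^2 I\}$, since $\gamma^2 I > P_1 \geq P_2 > 0$ gives $P_1^{-1} \leq P_2^{-1}$, hence $P_1^{-1} - \gamma^{-2} I \leq P_2^{-1} - \gamma^{-2} I$, and a second inversion flips the order. Monotonicity of $F_o$ is immediate, so $F_o \circ F_a$ is monotone on the relevant set. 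For the base case, the identity $M_1 = \bar P_\gamma = F_c(F_a(M_1))$ combined with the pointwise bound $F_o(P) - F_c(P) = A^\top P B (B^\top P B + R)^{-1} B^\top P A \geq 0$ gives $M_2 = F_o(F_a(M_1)) \geq F_c(F_a(M_1)) = M_1$. A simple induction, using the standing hypothesis $\gamma^2 I > M_k$ for every $k \in \{1,\dots,h\}$, then propagates the inequality to every $k \in \{1,\dots,h-1\}$.

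For (ii), my plan is to invoke the $h$-periodic $h_\infty$ game analysis of~\cite{hadi:20}. The underlying idea is that, with measurements only at times $0, h, 2h, \dots$, the minimax value function propagates through one period via an $F_a$-step for each worst-case disturbance together with an $F_o$-step at every non-measurement instant (because the controller cannot exploit the missing state), and a final $F_c$-step at the next sample (where state feedback is restored). Anchoring the recursion at $M_1 = \bar P_\gamma$ and tracking how the propagated cost matrix grows after $h-1$ open-loop steps reproduces exactly~\eqref{eq:iter}. The saddle-point maximization over each disturbance requires $\gamma^2 I - M_k > 0$ (the standard Schur-complement condition), and by the monotonicity established in (i) the conjunction of these conditions reduces to the single inequality $\gamma^2 I > M_h$. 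This identifies $\gamma_h = \inf\{\gamma : \gamma^2 I - M_h > 0\}$.

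For (iii), combine (i) and (ii): if $\gamma > \gamma_{h+1}$, then $\gamma^2 I > M_{h+1}(\gamma)$, and $M_{h+1} \geq M_h$ from (i) forces $\gamma^2 I > M_h(\gamma)$, so $\gamma \geq \gamma_h$; taking the infimum yields $\gamma_{h+1} \geq \gamma_h$. The main obstacle is a careful justification of (ii), specifically showing that the absence of inter-sample measurements really does replace $F_c$ by $F_o$ in the Riccati recursion while the sample itself is responsible for closing the loop back onto $\bar P_\gamma$; once that identification is in place, (i) and (iii) become short monotonicity arguments.
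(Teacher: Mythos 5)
Your proposal is correct and follows essentially the same route as the paper: part (i) via the base case $M_1=\bar P_\gamma=F_c(F_a(M_1))\leq F_o(F_a(M_1))=M_2$ together with monotonicity of $F_o\circ F_a$ and induction, part (ii) by invoking the characterization of $\gamma_h$ from~\cite{hadi:20} and reducing its positivity condition, via dynamic programming over one period, to $\gamma^2 I-M_\ell>0$, and part (iii) by combining (i) and (ii). The only local difference is that you justify monotonicity of $F_a$ through the resolvent identity $F_a(P)=(P^{-1}-\gamma^{-2}I)^{-1}$ and operator-monotonicity of matrix inversion, whereas the paper uses the variational representation of $x^\top F_o(F_a(P))x$ as a maximum over the disturbance $w$; both arguments are standard and valid.
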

\par The proof is given in the appendix.
\par We turn now to general schedules. The following result provides a simple way of computing the $h_\infty$ norm associated with a general schedule from the $h_\infty$ norm associated with an evenly spaced schedule. 
\par \begin{theorem}\label{th:5}
	Consider a periodic schedule characterized by the intervals $\mathcal{T}:=(\tau_0,\dots,\tau_{j-1})$. Let $\bar{\tau}=\max\{\tau_i|i\in \{0,1,\dots,j-1\}\}$. Then,
	$$ \gamma_{\mathcal{T}}=\gamma_{\bar{\tau}}.$$
\end{theorem}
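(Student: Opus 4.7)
The plan is to establish $\gamma_{\mathcal{T}}\le\gamma_{\bar{\tau}}$ and $\gamma_{\mathcal{T}}\ge\gamma_{\bar{\tau}}$ separately, both anchored on the Riccati characterization of Lemma~\ref{lem:1} and, in particular, on the monotonicity of the iterates $\{M_k\}$ stated in part~(i) therein.

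For the upper bound, I would fix an arbitrary $\gamma>\gamma_{\bar{\tau}}$, so that Lemma~\ref{lem:1}(ii) yields $\gamma^2 I-M_{\bar{\tau}}>0$ and hence, by part~(i), $\gamma^2 I-M_k>0$ for every $k\in\{1,\dots,\bar{\tau}\}$. I would then build a controller for schedule $\mathcal{T}$ that emulates the $\bar{\tau}$-periodic state-feedback design: at each sampling instant $s_\ell$, reset the internal estimate to the measured state $x_{s_\ell}$; during the $\tau_\ell-1$ blind steps that follow, propagate the estimate through $\hat{x}_{t+1}=A\hat{x}_t+Bu_t$ and set $u_t=K_\gamma\hat{x}_t$ with the gain from~\eqref{eq:K}. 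Because every $\tau_\ell\le\bar{\tau}$, the Riccati iterates used over the $\ell$-th interval form a prefix $M_1,\dots,M_{\tau_\ell}$ of those used in the $\bar{\tau}$-periodic case and remain strictly below $\gamma^2 I$. A standard $h_\infty$ dissipation argument along the lines of~\cite{basar:91} then yields~\eqref{eq:inequality}, giving $\gamma_{\mathcal{T}}\le\gamma$; letting $\gamma\downarrow\gamma_{\bar{\tau}}$ closes the first inequality.

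For the lower bound, I would reason adversarially. Suppose, toward a contradiction, that $\gamma_{\mathcal{T}}<\gamma_{\bar{\tau}}$, and pick $\gamma$ strictly between the two. By Lemma~\ref{lem:1}(ii) applied at $h=\bar{\tau}$, $\gamma^2 I-M_{\bar{\tau}}$ fails to be positive definite; equivalently, there is an adversarial signal of length $\bar{\tau}$ that drives the $\bar{\tau}$-periodic game past the dissipation threshold. I would locate any gap of length $\bar{\tau}$ in $\mathcal{T}$, say $[s_\ell,s_\ell+\bar{\tau})$, and build an $\ell_2$ disturbance that vanishes outside this window and replicates (suitably scaled) the worst-case signal inside it. Since the controller receives no measurements during the window, its response there is determined by $\hat{x}_{s_\ell}$ exactly as in the $\bar{\tau}$-periodic problem, so~\eqref{eq:inequality} is violated for every admissible controller, contradicting $\gamma_{\mathcal{T}}\le\gamma$.

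The hard part will be making this second step fully rigorous. One must argue that restricting the attack to a single window still violates the infinite-horizon $\ell_2$ inequality with $x_0=0$, controlling the tail of the response after the attack (via closed-loop stability of any candidate attenuator) and ruling out that the controller pre-compensates through earlier samples of $\mathcal{T}$. A shift-invariance argument exploiting the periodicity of $\mathcal{T}$, combined with the saddle-point structure of the $h_\infty$ game, should reduce the infinite-horizon attenuation inequality to the one-period Riccati condition of Lemma~\ref{lem:1}, which is precisely where the hypothesis $\gamma<\gamma_{\bar{\tau}}$ delivers the contradiction.
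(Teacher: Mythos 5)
Your two-sided contradiction skeleton --- upper bound by emulating the $\bar{\tau}$-periodic design on every (shorter) interval, lower bound by attacking a single window of length $\bar{\tau}$ --- is the same as the paper's, and the upper-bound half is essentially the paper's first step: the iterates over an interval of length $\tau_\ell\leq\bar{\tau}$ are a prefix of the monotone sequence $M_1,\dots,M_{\bar{\tau}}$ of Lemma~\ref{lem:1}, all dominated by $\gamma^2 I$, and a completion-of-squares/dissipation identity (the paper's Lemma~\ref{lem:3}) closes that direction.

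The genuine gap is in the lower bound, exactly where you write ``the hard part,'' and neither closed-loop stability nor shift-invariance fixes it. The matrix $M_{\bar{\tau}}$ whose indefiniteness you invoke is generated from the initialization $M_1=\bar{P}_\gamma$; equivalently, the completion of squares over the attack window carries the \emph{terminal weight} $\bar{P}_\gamma$, so what it delivers is a lower bound on
\begin{equation*}
\sum_{t=s_{\ell}}^{s_{\ell}+\bar{\tau}-1}\bigl(z_t^\top z_t-\gamma^2 w_t^\top w_t\bigr)\;+\;x_{s_{\ell}+\bar{\tau}}^\top \bar{P}_\gamma\, x_{s_{\ell}+\bar{\tau}},
\end{equation*}
not on the window sum alone. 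If the disturbance vanishes after the window, the tail contributes only $\sum_{t\geq s_{\ell}+\bar{\tau}} z_t^\top z_t\geq 0$, which does not repay the debit $-x_{s_{\ell}+\bar{\tau}}^\top \bar{P}_\gamma x_{s_{\ell}+\bar{\tau}}$; worse, both the gain you extract from scaling $\eta$ along a negative eigenvector of $\gamma^2 I-M_{\bar{\tau}}$ and this uncompensated terminal debit grow quadratically in $\|\eta\|$, so you cannot win by scaling. (And if you redo the completion of squares with terminal weight $0$ instead, the relevant iterate is no longer $M_{\bar{\tau}}$ and the hypothesis $\gamma<\gamma_{\bar{\tau}}$ gives you nothing.) The missing ingredient is the paper's Lemma~\ref{lem:4}: after the window the adversary keeps playing the finite-horizon game gains $\bar{L}_k$ for $q$ further steps, which forces the tail to be at least $x_{s_{\ell}+\bar{\tau}}^\top G_q\, x_{s_{\ell}+\bar{\tau}}$ with $G_q\to\bar{P}_\gamma$, i.e.\ within any prescribed $\alpha$ of the required terminal value. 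Only then does scaling $\eta$ dominate the pre-window cost (which is a fixed, $\eta$-independent constant once the controller is fixed, by causality --- this is also what disposes of your concern about pre-compensation through earlier samples) and yield the contradiction.
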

\begin{flushright}
$\square$
\end{flushright}
\begin{proof}  Suppose that 	$\gamma_{\mathcal{T}}>\gamma_{\bar{\tau}}$. 
		Pick a $\gamma$ such that
		$ \gamma_{\mathcal{T}}>\gamma>\gamma_{\bar{\tau}}$. Then for any given arbitrary control policy and for schedules $\mathcal{T}$ there exists a $w\in \ell_2$ such that $ \|z\|^2-\gamma^2\|w\|^2>0$
		i.e., 
		$ \lim_{T\rightarrow \infty} \sum_{t=0}^{T-1}z_t^\intercal z_t-\gamma^2w_t^\intercal w_t>0$. This implies that there exists a  $\underline{t}\in \mathbb{N}$ and a $w\in \ell_2$ such that, for $\bar{t}\geq \underline{t}$,
		\begin{equation}\label{eq:a}
			 \sum_{t=0}^{\bar{t}-1}z_t^\intercal z_t-\gamma^2w_t^\intercal w_t>0
		\end{equation} 
		Suppose that we pick $\bar{t}$ to be a multiple of $h$. Using  Lemma~\ref{lem:2} in the appendix, we conclude that if we pick the control policy $u_t = K_tx_t$, $t\in \{0,1,\dots,\bar{\tau}\}$ where the $K_t$ are obtained from the iteration~\eqref{eq:iteration} initialized with $P_\tau=Y_0=0$ and $\tau=\bar{t}$, we get
		
	{\footnotesize
	\begin{equation}\label{eq:FF}
		\begin{aligned}
			&\sum_{k=0}^{\bar{t}-1}z_k^\top z_k-\gamma^2 w_k^\top w_k =\underbrace{x_0}_{=0}^\top P_0 x_0-\\\
			&\sum_{k=0}^{\bar{t}-1}(w_k-{L}_k(Ax_k+Bu_k))^\top (\gamma^2 I-P_{k+1})(w_k-{L}_k(Ax_k+Bu_k))
\end{aligned}\end{equation}	}
provided that the $\gamma^2I-P_t$ are invertible. This is indeed the case since as we now argue $\gamma^2I-P_t>0$ for every $t \in \{0,1,\dots,\bar{t}-1\}$. To see this it suffices to establish that $P_t\leq \bar{P}_\gamma$ for every $t \in \{0,1,\dots,\bar{t}-1\}$ since $\gamma^2I-\bar{P}_\gamma>0$ by hypothesis. If we run the iteration~\eqref{eq:iteration} for $\tau=\bar{t}$ with $Y_0 = \bar{P}_\gamma$ we obtain $P_t = P_\gamma$ for every $t \in \{0,1,\dots,\bar{t}\}$. From the monotonicity property of Lemma~\ref{lem:1} we conclude that the $P_t$ obtained when $Y_0=0$ satisfy $P_t\leq \bar{P}_\gamma$ for every $t \in \{0,1,\dots,\bar{t}-1\}$ as desired. Note that~\eqref{eq:FF} implies $\sum_{t=0}^{\bar{t}-1}z_t^\intercal z_t-\gamma^2w_t^\intercal w_t\leq 0$ for every disturbance sequence which contradicts~\eqref{eq:a}.
	\par	Suppose now that 
			$$ \gamma_{\mathcal{T}}<\gamma_{\bar{\tau}}$$
		so that for the schedule $\mathcal{T}$ we can guarantee that
	\begin{equation}\label{eq:zw}
		\|z\|^2-\gamma^2\|w\|^2<0
	\end{equation} 
		for some $\gamma$, $\gamma_{\mathcal{T}}<\gamma<\gamma_{\bar{\tau}}$ and for every $w\in \ell^2$. Note that in this case $\gamma^2I-M_{\bar{\tau}}$ has an eigenvalue which is negative. Let $\ell_m \in \text{arg}\max \{\tau_\ell|\ell \in \{0,\dots,j-1\}\}$ so that $\tau_{\ell_m}=\bar{\tau}$ and consider the following disturbance policy
		$$ w_t = \left\{\begin{aligned}
			& \xi ,\ \ t \in \{0,\dots,s_{\ell_m}-1\} \\ 
		    & \tilde{L}_{\bar{\tau} } (Ax_t+Bu_t)+\eta\text{ if }t=s_{\ell_m} \\
		    & \tilde{L}_{\bar{\tau}-t-s_{\ell_m}} \! \!(Ax_t\!+\!Bu_t),\! \!\text{ if }t \!\in\! \{s_{\ell_m}\!+\!1,\dots,s_{\ell_m}\!+\!\bar{\tau}\!-\!1\! \} \ \\
		    & \bar{L}_t (Ax_t+Bu_t) \ \ t \in \{s_{\ell_m}+\bar{\tau},\dots,s_{\ell_m}+\bar{\tau}+q \} \\
		    & 0, \text{ if }t\geq s_{\ell_m+1}+q+1\\
		\end{aligned}\right.$$
	where:
	\begin{itemize}
		\item $\xi \in \mathbb{R}^{n_x}$ is an arbitrary constant;
		\item $\eta \in \mathbb{R}^{n_x}$ will be chosen in the sequel;
		\item $\tilde{L}_k  =  (\gamma^2 I-M_{k})^{-1}M_{k} $, $k\in \{1,\dots,\bar{\tau}-1\}$;
		\item $\bar{L}_t$, $t \in \{s_{\ell_m}+\bar{\tau},\dots,s_{\ell_m}+\bar{\tau}+q\}$ are  given by ~\eqref{eq:LK} in Lemma~\ref{lem:4} below (with $k=s_{\ell_m}+\bar{\tau}$)  and $q$ is such that $\|x_{s_{\ell_m}+\bar{\tau}}^\intercal G_q x_{s_{\ell_m}+\bar{\tau}}-x_{s_{\ell_m}+\bar{\tau}}^\intercal \bar{P}_\gamma x_{s_{\ell_m}+\bar{\tau}}|<\alpha$ for a given and arbitrary $\alpha>0$. Such a $q$ depends on $x_{s_{\ell_m}+\bar{\tau}}$ and exists due to Lemma~\ref{lem:4}.
	\end{itemize}
Then
	 \begin{eqnarray}
&&	\!\!\!\!\!\! \|z\|^2-\gamma^2\|w\|^2 \nonumber \\
&&  \!\!\!\!\!\! =\underbrace{\sum_{t=0}^{{s}_{\ell_m}-1}z_t^\intercal z_t-\gamma^2w_t^\intercal w_t}_{:=c}+\sum_{t={s}_{\ell_m}}^{{s}_{\ell_m}+\bar{\tau} -1}z_t^\intercal z_t-\gamma^2w_t^\intercal w_t \nonumber\\
&& \ \ \ \ \ \ \ \ \ \ \ \ \ \ \ \ \ \ \ \ +\underbrace{\sum_{t=\bar{s}_{\ell_m}+\bar{\tau}}^\infty z_t^\intercal z_t-\gamma^2w_t^\intercal w_t}_{\geq x_{s_{\ell_m}+\bar{\tau}}^\intercal G_q x_{s_{\ell_m}} \text{  due to~\eqref{eq:ine} below }} \nonumber \\
&& \!\!\!\!\!\!\geq c+\sum_{t={s}_{\ell_m}}^{{s}_{\ell_m}+\bar{\tau} -1}z_t^\intercal z_t-\gamma^2w_t^\intercal w_t + x_{{s}_{\ell_m}+\bar{\tau}}^\intercal \bar{P}_\tau x_{{s}_{\ell_m}+\bar{\tau}} \nonumber \\
&& \ \ \ \ \ \ \ \ \ \ \ \ \underbrace{-x_{{s}_{\ell_m}+\bar{\tau}}^\intercal \bar{P}_\tau x_{{s}_{\ell_m}+\bar{\tau}}+x_{{s}_{\ell_m}+\bar{\tau}}^\intercal G_q x_{{s}_{\ell_m}+\bar{\tau}}}_{\geq \alpha} \nonumber \\
&& \!\!\!\!\!\!\geq c+\alpha+x_{{s}_{\ell_m}}^\intercal \bar{P}_\tau x_{{s}_{\ell_m}}- \nonumber\\
&&\!\!\!\!\!\!\sum_{k=s_{\ell_m}}^{s_{\ell_m}+\bar{\tau}-1}\!\!\!\!(w_k-\tilde{w}_k)^\top (\gamma^2 I-M_{s_{\ell_m}+\bar{\tau}-k})(w_k-\tilde{w}_k))\label{eq:Fa}\\
 && \!\!\!\!\!\!= c+\alpha + x_{{s}_{\ell_m}}^\intercal \bar{P}_\tau x_{{s}_{\ell_m}}-\eta^\intercal (\gamma^2I-M_{\bar{\tau}})\eta > 0\nonumber
	\end{eqnarray}
where $\tilde{w}_k=\tilde{L}_{\bar{\tau}-k-s_{\ell_m}} \! \!(Ax_k\!+\!Bu_k)$, in~\eqref{eq:Fa} we have used Lemma~\ref{lem:3} below with $Y_0=P_\tau$ and $\tau = \bar{\tau}$, and in the last inequality we have picked $\eta$ to be aligned with an eigenvector associated with an eigenvalue of $(\gamma^2I-M_{\bar{\tau}})$ that is negative and multiplied by a constant high enough to make the expression positive. This contradicts~\eqref{eq:zw} concluding the proof.
\end{proof}

Theorem~\ref{th:5} has several consequences discussed next.

\begin{corollary}
A sampling schedule guarantees the smallest attenuation bound ($h_\infty$ norm) achievable for a given rational average sampling time $\frac{h}{m}$ if  its largest interval does not exceed $\tilde{h}=\lceil \frac{h}{m}\rceil $. Moreover, if \begin{equation}\label{eq:Fc}
	\gamma_{\tilde{h}-1}>\gamma_{\tilde{h}}>\gamma_{\tilde{h}+1}
\end{equation} the schedules satisfying this property are the unique schedules that guarantee the smallest attenuation bound.    Furthermore, under~\eqref{eq:Fc}, the schedule with smallest average rate that guarantees this attenuation bound corresponds to evenly space sampling. 
\end{corollary}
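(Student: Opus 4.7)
The corollary is essentially a direct consequence of Theorem~\ref{th:5} combined with the monotonicity in Lemma~\ref{lem:1}(iii), together with an elementary pigeonhole observation, so I do not expect any serious technical obstacle.

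First I would argue that for any periodic schedule $\mathcal{T}=(\tau_0,\dots,\tau_{m-1})$ with average sampling time $h/m$ the intervals sum to $h$, so the largest interval $\bar{\tau}$ satisfies $\bar{\tau}\geq h/m$; since $\bar{\tau}\in\mathbb{N}$ this yields
\[
\bar{\tau}\geq \lceil h/m\rceil = \tilde{h}.
\]
Theorem~\ref{th:5} gives $\gamma_{\mathcal{T}}=\gamma_{\bar{\tau}}$, and by Lemma~\ref{lem:1}(iii) the map $h\mapsto \gamma_h$ is non-decreasing, so $\gamma_{\mathcal{T}}\geq \gamma_{\tilde{h}}$, with equality whenever $\bar{\tau}=\tilde{h}$. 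This already establishes the first assertion: any schedule whose maximum interval does not exceed $\tilde{h}$ must (under the average constraint) satisfy $\bar{\tau}=\tilde{h}$ and hence attain the lower bound $\gamma_{\tilde{h}}$.

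For the uniqueness claim I would invoke \eqref{eq:Fc}, which gives strict monotonicity of $\gamma_h$ in a neighbourhood of $\tilde{h}$. Then any $\bar{\tau}>\tilde{h}$ yields $\gamma_{\mathcal{T}}=\gamma_{\bar{\tau}}>\gamma_{\tilde{h}}$, contradicting optimality; combined with $\bar{\tau}\geq\tilde{h}$ this forces $\bar{\tau}=\tilde{h}$, so the schedules characterized in the first part are the only optima within the $h$-periodic, $m$-interval class.

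For the final statement I would drop the constraint on the average sampling time and identify, under \eqref{eq:Fc}, the set of schedules that guarantee the bound $\gamma_{\tilde{h}}$ as exactly $\{\mathcal{T}:\bar{\tau}\leq \tilde{h}\}$. For any such schedule the average sampling interval satisfies $\tfrac{1}{m}\sum_\ell \tau_\ell\leq\bar{\tau}\leq\tilde{h}$, with equality iff every $\tau_\ell$ equals $\tilde{h}$. Consequently the smallest average rate compatible with the attenuation bound is $1/\tilde{h}$, attained uniquely by evenly spaced sampling with period $\tilde{h}$. The only point that requires care in the write-up is distinguishing the weakly monotone inequality needed to attain $\gamma_{\tilde{h}}$ from the strict monotonicity in \eqref{eq:Fc} needed for uniqueness; everything else is a one-line invocation of Theorem~\ref{th:5} and Lemma~\ref{lem:1}.
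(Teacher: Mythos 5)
Your proposal is correct and follows exactly the route the paper intends: the corollary is stated without proof as an immediate consequence of Theorem~\ref{th:5} (reducing $\gamma_{\mathcal{T}}$ to $\gamma_{\bar{\tau}}$) and Lemma~\ref{lem:1}(iii) (monotonicity of $h\mapsto\gamma_h$), plus the pigeonhole fact that an integer maximum of $m$ intervals summing to $h$ is at least $\lceil h/m\rceil$, which is precisely what you supply. The only point worth flagging is that~\eqref{eq:Fc} as printed ($\gamma_{\tilde{h}-1}>\gamma_{\tilde{h}}>\gamma_{\tilde{h}+1}$) is inconsistent with the non-decreasing property in Lemma~\ref{lem:1}(iii) and is evidently a typographical reversal; your reading of it as strict monotonicity of $\gamma_h$ near $\tilde{h}$ is the correct one and is what both the uniqueness claim and the final claim about the smallest average rate require.
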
	
\begin{flushright}
	$\square$
\end{flushright}
	 \par Note that also here there can be multiple optimal schedules (in the sense of this corollary), but are in general different from the ones for the $h_2$ case.
	\par Due to this corollary, the plot optimal achievable $h_\infty$ norm versus average sampling time (or average rate) is  (a restriction to the rational numbers of) a discontinuous piecewise constant function, where these constants are equal to the $h_\infty$ norms of evenly spaced sampling with integer average sampling interval $h$ (see Figure~\ref{fig:4} below).
	
	\par Note that we can have schedules corresponding to arbitrarily poor $h_\infty$ norm and maximum sampling rate. In fact,  the schedule characterized by $h=\omega b+b$
	$$ \tau_i= \left\{\begin{aligned}
	&	1 \text{, if }i \in \{0,\dots,\omega b\} \\ 
	& b\text{, if }i=\omega b+1
	\end{aligned}\right.$$
leads to an $h_\infty$ norm $\gamma_{b}$ and average rate $\frac{\omega b}{(\omega +1)b}$. For systems for which $\gamma_{b}\rightarrow \infty$ as $b\rightarrow  \infty$, the $h_\infty$ norm becomes arbitrary poor while the average rate converges to $1$ as $b\rightarrow  \infty$ and $\omega\rightarrow  \infty$.
\section{Numerical example}\label{sec:5}
\par Suppose that 
$$A = \begin{bmatrix}
	1 & 1 & 1 \\ 0 & 1 & 1 \\ 0 & 0 &1 
\end{bmatrix}, \ \ B = \begin{bmatrix}
0 \\
0\\
1  
\end{bmatrix}, \ \ Q = I_3, \ \ R =1, $$
where $I_3$ is the identity matrix. The values of $\beta(h)$, $J_{2,h}$, and $\gamma_h$ are given in Table~\ref{tb:1} and plotted in Figures~\ref{fig:2},~\ref{fig:3},~\ref{fig:4}, respectively.

{\footnotesize
	\begin{table}[b!]
		\centering
\begin{tabular}{c|cccccc}
	$h $ & 1 & 2 & 3  & 4 & 5 & 6  \\
	\hline 
	$\beta(h)$ & 0   &   38.1 & 179.1 & 548.2  &  1361.3 & 2960.2   \\
	\hline 
	$J_{2,h}$ & 14.3 &   33.4  & 73.9 &  151.9  & 286.5 & 507.6  \\
	\hline
	$\gamma_h$ & [3.805 & 7.97 & 14.55 & 24.18  & 37.45 & 54.58  
\end{tabular}
\caption{Values of $\beta(h)$, $J_{2,h}$, and $\gamma_h$ for $h\in \{1,2,\dots,6\}$ for the numerical example.}\label{tb:1}
\end{table}
}

In Figures~\ref{fig:3} the optimal $h_2$  value achievable with the corresponding rational average sampling time is plotted and in Figure~\ref{fig:4} plotted and the optimal $h_\infty$  value achievable with the corresponding rational average rate is plotted.

\begin{figure}[t!]
	\centering
	\includegraphics[width=6cm]{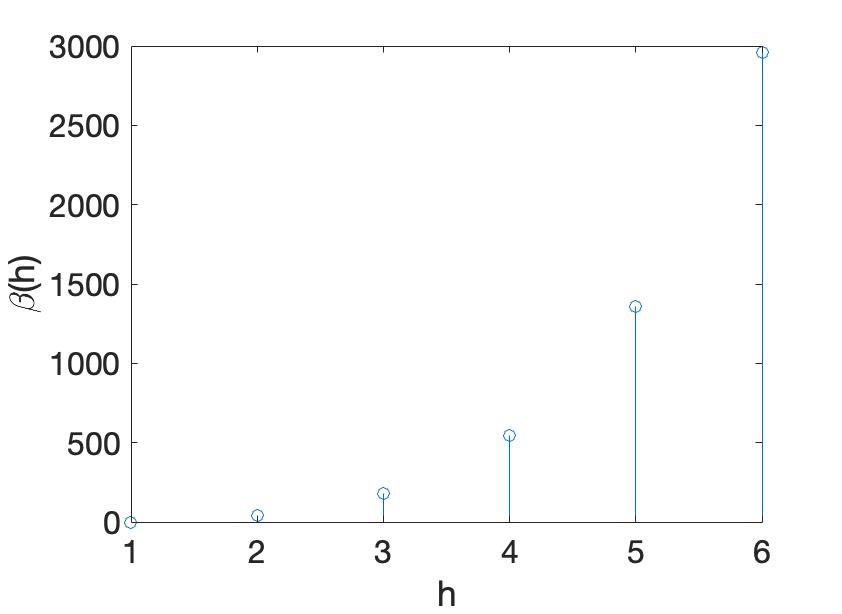}
	\caption{$\beta(h)$ versus $h$}\label{fig:2}
\end{figure} 

\begin{figure}[t!]
	\centering
	\includegraphics[width=6cm]{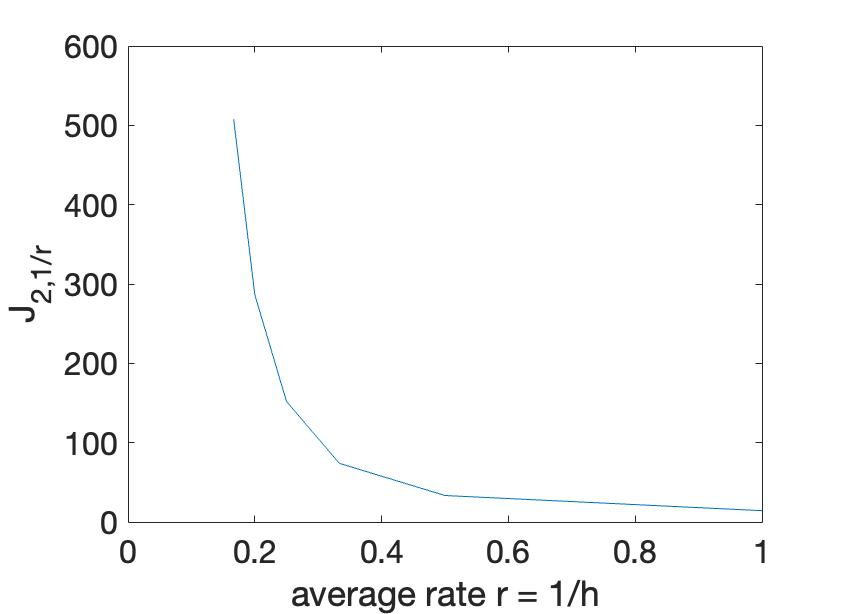}
	\caption{$J_{2,h}$ versus average rate}\label{fig:3}
\end{figure} 

\begin{figure}[t!]
	\centering
	\includegraphics[width=6cm]{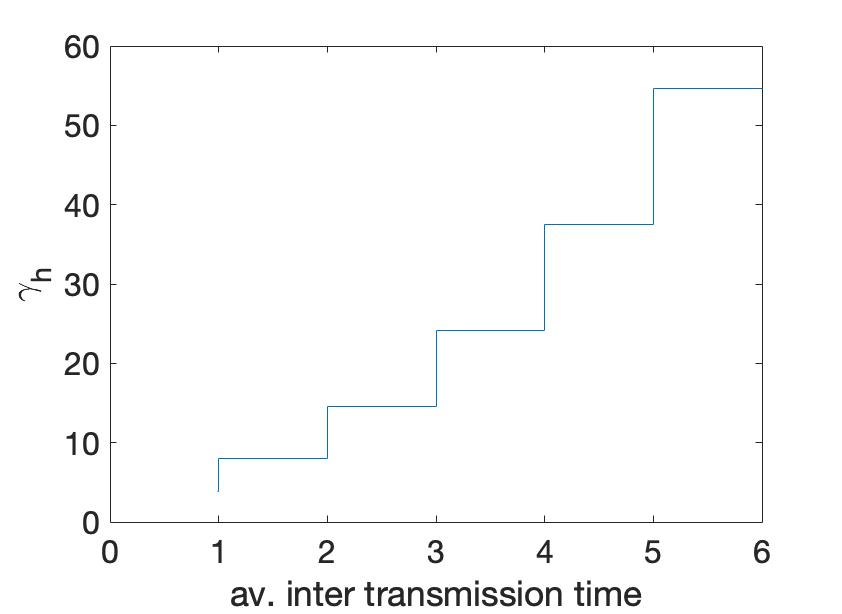}
	\caption{$\gamma_{h}$ versus average sampling interval}\label{fig:4}
\end{figure}

\section{Conclusions}\label{sec:6}
\par In  this paper we have characterized sampling schedules that are optimal for  $h_2$ and $h_\infty$ feedback control. We have shown that if the desired average intersampling time is $h$ then evenly spaced sampling is optimal in both senses. However, when the desired average intersampling time is not an integer, then the class of optimal schedules is different in the $h_2$ and $h_\infty$ senses. While for $h_2$ schedules close to evenly spaced sampling are still optimal, in the $h_\infty$ framework the $h_\infty$ norm is only dictated by the largest sampling interval.

\appendix

\section*{Proof of Proposition~\ref{prop:1}}

Consider the following cost
\begin{equation}\label{eq:costproof}
	\mathbb{E}[\sum_{t=0}^{T-1}\|z_t\|^2+x_T^\intercal P x_T]
\end{equation}
where  $T=mh$ for some $m\in \mathbb{N}$. From standard optimal control results, the optimal policy that minimizes~\eqref{eq:costproof}  is~\eqref{eq:controller1} where $\hat{x}_{k|k}$ is a Kalman filter estimate, and the optimal cost is
\begin{equation}\label{eq:costproof2} x_0^\intercal P x_0+\sum_{t=0}^{T-1}\text{tr}(Z\Phi_t)+T\text{trace}(ZW).
\end{equation}
where $\Phi_t = \mathbb{E}[(x_t-\hat{x}_{t|t})(x_t-\hat{x}_{t|t})^\intercal]$, 
see~\cite[Ch. 5]{bertsekas:05}. Since $x_t-\hat{x}_{t|t}=0$ when $\sigma_t = 1$, $\Phi_t$ resets to $\Phi_t=0$ when $\sigma_t = 1$. Moreover, it is a periodic function equal to 
$$\Phi_t=\left\{\begin{aligned}
	& 0 \text{ if }\sigma_t=1, \\ 
	&Y(t-s_{\bar{\ell}(t)}), \text{ otherwise, }
\end{aligned}\right.$$
where $\bar{\ell}(t)=\max\{\ell|s_\ell\leq t\}$. Then
$$ \lim_{T\rightarrow \infty}\frac{1}{T}\sum_{t=0}^{T-1}\text{tr}(Z\Phi_t) = \frac{1}{h}(\sum_{\ell=0}^{j-1} \gamma(\tau_\ell))$$
Dividing~\eqref{eq:costproof2}  by $T$ and taking the limit as $T\rightarrow \infty$ we obtain the desired conclusion (note that~\eqref{eq:controller1} is the unique optimal policy for~\eqref{eq:costproof2} but not unique for~\eqref{eq:sys3} while still optimal).
\section*{ Proof of Lemma~\ref{lem:1}}\label{sec:A}

\par (i) When $k=1$, $M_1=\bar{P}_\gamma = F_c(F_a(\bar{P}_\gamma)) = M_2-A^\top F_a(\bar{P}_\gamma)B(B^\top F_a(\bar{P}_\gamma)B+R)^{-1}B^\top F_a(\bar{P}_\gamma)A\leq M_2$. We now prove that, $F_o(F_a(P))$ is a monotone  map in the sense that $F_o(F_a(P_1))\leq F_o(F_a(P_2))$when $P_1\leq P_2$. This follows from the fact that, for $P<\gamma^2I$ and that for an arbitrary $x$
$$x^\intercal F_o(F_a(P))x=  x^\intercal Qx+\max_w (Ax+w)^\intercal P(Ax+w)$$
Using induction, suppose that $M_{k-1}\leq M_k$ for some $k \in \{2,\dots,h-1\}$. Then  $F_o(F_a(M_{k-1}))\leq F_o(F_a(M_k))$ and, concluding the proof.
\par (ii) In~\cite{hadi:20} it is shown that $\gamma_h = \inf\{ \gamma|\gamma^2 I-\bar{D}_h^\top \text{diag}(I\otimes Q,\bar{P}_\gamma)\bar{D}_h>0\}$, with
$$\bar{D}_h=\left[\begin{array}{cccc}
	I & 0 & \dots & 0 \\
	A  & I & \dots & 0 \\
	\vdots & \vdots & \vdots & \vdots \\
	A^{\tau-1}  & A^{\tau-2} & \dots & I
\end{array}\right]$$
and the condition $\gamma^2 I-\bar{D}_h^\top \text{diag}(I\otimes Q,\bar{P}_\gamma)\bar{D}_h>0$  is equivalent to the following function being concave in $w_0,\dots, w_{h-1}$:
$$ \sum_{k=0}^{h-1}z_k^\top z_k -\gamma^2 w_k^\top w_k+ x_h^\top \bar{P}_\gamma x_h,$$
where $x_{k+1}=Ax_k+w_k$ for $k\in \{0,1,\dots,h-1\}$. Applying dynamic programming to maximize this function with respect to  $w_{\ell+1},\dots, w_{h-1}$, for $\ell \in \{0,\dots,h-1\}$,  we obtain that this cost is equal to
$$
\sum_{k=0}^{\ell}z_k^\top z_k -\gamma^2 w_k^\top w_k+ x_{\ell+1}^\top M_\ell x_{\ell+1}$$
We can rewrite this expression as
\begin{equation*}\label{eq:cost}
	\begin{aligned}
		& \sum_{k=0}^{\ell-1} z_k^\top  z_k  -\gamma^2 w_k^\top w_k + x_\ell^\top \!(Q\!+\!A^\top M_\ell A )x_\ell \\
		& \ \ \ \ \ \ \ \ \ \ \ \ + w_{\ell}^\top  (M_\ell -\gamma^2I)w_{\ell} +2w_{\ell}^\top M_\ell A x_\ell
	\end{aligned}
\end{equation*} 
from which clear that this function is concave in $w_{\ell} $ if and only if $\gamma^2 I-M_\ell>0$ where $\ell \in \{1,\dots,h\}$ is arbitrary.

(iii) Follows from (i) and (ii).

\section*{Auxiliary Lemmas}
	\begin{lemma}\label{lem:2}
		If $0\leq P_1\leq P_2$ then
		$$ F_c(F_a(P_1))\leq F_c(F_a(P_2))$$
		\begin{flushright}
			$\square$
		\end{flushright}
		\end{lemma}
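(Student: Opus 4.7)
The plan is to prove the monotonicity of the composition $F_c\circ F_a$ by establishing monotonicity of each factor separately (under the implicit invertibility assumption $\gamma^2 I>P_2$, which is what makes $F_a$ well defined and is consistent with the regime assumed throughout Section~\ref{sec:4}). Since the Löwner order is preserved under composition of order-preserving maps, monotonicity of $F_a$ followed by monotonicity of $F_c$ immediately yields the claim.

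For $F_a$, the plan is to perform a short algebraic rewrite to expose the dependence on $P$ inside a single matrix inverse. Factoring, one verifies that
\begin{equation*}
F_a(P)=P+P(\gamma^2 I-P)^{-1}P=-\gamma^2 I+\gamma^4(\gamma^2 I-P)^{-1},
\end{equation*}
which follows by multiplying the right-hand side by $(\gamma^2 I-P)$ on the appropriate side. From $0\leq P_1\leq P_2<\gamma^2 I$ we get $0<\gamma^2 I-P_2\leq \gamma^2 I-P_1$, and operator monotonicity of the matrix inverse on the positive definite cone then gives $(\gamma^2 I-P_1)^{-1}\leq (\gamma^2 I-P_2)^{-1}$. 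Multiplying by the positive scalar $\gamma^4$ and shifting by $-\gamma^2 I$ preserves the inequality, so $F_a(P_1)\leq F_a(P_2)$.

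For $F_c$, I would use the standard variational characterization of the discrete-time Riccati operator. Completing the square in $u$ shows that, for every $x\in\mathbb{R}^{n_x}$,
\begin{equation*}
x^{\top}F_c(P)x=x^{\top}Q x+\min_{u\in\mathbb{R}^{n_u}}\bigl[u^{\top}Ru+(Ax+Bu)^{\top}P(Ax+Bu)\bigr],
\end{equation*}
with minimizer $u^\ast=-(R+B^{\top}PB)^{-1}B^{\top}PAx$. Given $F_a(P_1)\leq F_a(P_2)$ from the previous paragraph, the quadratic form $(Ax+Bu)^{\top}F_a(P)(Ax+Bu)$ is pointwise non-decreasing in $P$ for every fixed $u$, so the pointwise inequality survives the minimum over $u$. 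Hence $x^{\top}F_c(F_a(P_1))x\leq x^{\top}F_c(F_a(P_2))x$ for all $x$, which is exactly $F_c(F_a(P_1))\leq F_c(F_a(P_2))$.

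The only real obstacle is the one already flagged: guaranteeing that the inverse in $F_a$ is well-defined at $P_2$, i.e.\ $\gamma^2 I-P_2>0$. In the context where this lemma is invoked (the proof of Theorem~\ref{th:5}, where $P_t\leq\bar{P}_\gamma$ and $\gamma^2 I>\bar{P}_\gamma$), this condition is automatic, and the rewrite of $F_a$ above and the variational formula for $F_c$ both hold without further assumptions on $A$, $B$, $Q$, $R$ beyond $R>0$. Everything else is routine and mirrors the argument used for $F_o\circ F_a$ in part (i) of Lemma~\ref{lem:1}, with the single change that the inner operator $F_c$ is expressed as a minimum over $u$ rather than as a plain quadratic.
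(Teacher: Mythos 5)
Your proof is correct, and it takes a partially different route from the paper's. The paper proves the composite monotonicity in one stroke via the saddle-point characterization
$x^{\top}F_c(F_a(P))x=\min_u\max_w\,\bigl[x^{\top}Qx+u^{\top}Ru-\gamma^2 w^{\top}w+(Ax+Bu+w)^{\top}P(Ax+Bu+w)\bigr]$,
where the inner maximum over $w$ produces $F_a$ and the outer minimum over $u$ produces $F_c$; since the bracketed expression is pointwise non-decreasing in $P$ and min--max preserves pointwise ordering, the claim follows. You keep the same variational argument for the outer $F_c$ step but replace the $\max_w$ step by the resolvent identity $F_a(P)=-\gamma^2 I+\gamma^4(\gamma^2 I-P)^{-1}$ together with operator monotonicity of the matrix inverse. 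Both are valid; the paper's version is shorter and aligns with the $h_\infty$ game interpretation used throughout Section~\ref{sec:4}, while yours makes the monotonicity of $F_a$ purely algebraic and, usefully, forces the domain condition $\gamma^2 I>P_2$ into the open --- a hypothesis the paper leaves implicit but which is indeed needed for $F_a(P_2)$ to be well defined (and is satisfied where the lemma is invoked, since there $P_2\leq\bar{P}_\gamma<\gamma^2 I$). One small point worth stating explicitly in your $F_c$ step: you also need $R+B^{\top}F_a(P)B>0$ for the minimizer to exist and the completion of squares to be valid, which holds because $R>0$ and $F_a(P)\geq 0$ whenever $0\leq P<\gamma^2 I$.
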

		\begin{proof}
						The proof follows by noticing that, for every $x\in \mathbb{R}^{n_x}$
			$$ \begin{aligned}
				x^\intercal F_c(F_a(P_1)) x &= \min_u \max_w x^\intercal Qx+u^\intercal Ru-\gamma^2 w^\intercal w+\\
				&(Ax+Bu+w)^\intercal P(Ax+Bu+w)^\intercal.
			\end{aligned}$$
		\end{proof}
	\begin{lemma}\label{lem:3}
		Consider~\eqref{eq:sys},~\eqref{eq:output}, and the following iteration
		\begin{equation}\label{eq:iteration} P_{k-1} = F_c(F_a(P_k)), k \in \{\tau,\tau-1,\dots,0\} \end{equation}
		with $P_\tau =Y_0$ for some  $Y_0\geq 0$ and
		where $\gamma$ is such that $\gamma^2 I-P_k$ is invertible for every  $k \in\{\tau,\tau-1,\dots,0\}$.  Then
		{\footnotesize
			\begin{equation}\label{eq:FF3}
				\begin{aligned}
					&\sum_{k=0}^{\tau-1}z_k^\top z_k-\gamma^2 w_k^\top w_k+x_\tau^\top P_\tau x_\tau=x_0^\top P_0 x_0+\\
					& \sum_{k=0}^{\tau-1}(u_k-K_k x_k)^\top (R+B^\top F_a(P_{k+1})B)(u_k-K_k x_k)-\\
					&\sum_{k=0}^{\tau-1}(w_k-{L}_k(Ax_k+Bu_k))^\top (\gamma^2 I-P_{k+1})(w_k-{L}_k(Ax_k+Bu_k))
		\end{aligned}\end{equation}	}
		where $K_k$ and $L_k$ are given by
		$$\begin{aligned}
			K_k &= -(R+B^\top F_a(P_{k+1})B)^{-1}B^\top F_a({P}_{k+1})A,  \\
			L_{k}&=(\gamma^2I-P_{k+1})^{-1}P_{k+1} A
		\end{aligned}	 $$
		for $k\in\{0,1,\dots,\tau-1\}$ \hfill 	$\square$
	\end{lemma}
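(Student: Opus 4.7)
\textbf{Plan of proof for Lemma~\ref{lem:3}.} The identity is a standard two-step ``completion of squares'' (first over $w_k$, then over $u_k$) applied to a telescoping sum. My plan is to rewrite the left-hand side using the telescoping identity
\[
x_\tau^\top P_\tau x_\tau - x_0^\top P_0 x_0 = \sum_{k=0}^{\tau-1}\bigl(x_{k+1}^\top P_{k+1} x_{k+1} - x_k^\top P_k x_k\bigr),
\]
so that the claim reduces to showing that, for each $k$, the ``per-stage'' quantity
\[
\Lambda_k := z_k^\top z_k - \gamma^2 w_k^\top w_k + x_{k+1}^\top P_{k+1} x_{k+1} - x_k^\top P_k x_k
\]
equals $(u_k-K_k x_k)^\top(R+B^\top F_a(P_{k+1})B)(u_k-K_k x_k) - (w_k - L_k(Ax_k+Bu_k))^\top(\gamma^2 I - P_{k+1})(w_k - L_k(Ax_k+Bu_k))$.

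Using $z_k^\top z_k = x_k^\top Q x_k + u_k^\top R u_k$ (which relies on $C_2^\top D_{21}=0$) and substituting $x_{k+1}=Ax_k+Bu_k+w_k$, I will first regard $\Lambda_k$ as a quadratic in $w_k$ with Hessian $-(\gamma^2 I - P_{k+1})$ and linear coefficient $2 P_{k+1}(Ax_k+Bu_k)$. Completing the square in $w_k$ around the point $L_k(Ax_k+Bu_k)$, where $L_k=(\gamma^2 I - P_{k+1})^{-1}P_{k+1}$, leaves the quadratic form $(Ax_k+Bu_k)^\top [\,P_{k+1}+P_{k+1}(\gamma^2 I-P_{k+1})^{-1}P_{k+1}\,](Ax_k+Bu_k) = (Ax_k+Bu_k)^\top F_a(P_{k+1})(Ax_k+Bu_k)$, which is exactly the purpose of the transformation $F_a$.

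Next I will collect all remaining terms that depend on $u_k$, namely $u_k^\top R u_k + (Ax_k+Bu_k)^\top F_a(P_{k+1})(Ax_k+Bu_k)$, and complete the square in $u_k$ around $K_k x_k$, with $K_k = -(R+B^\top F_a(P_{k+1})B)^{-1}B^\top F_a(P_{k+1})A$. The residual quadratic form in $x_k$ is $x_k^\top[Q + A^\top F_a(P_{k+1})A - A^\top F_a(P_{k+1})B(R+B^\top F_a(P_{k+1})B)^{-1}B^\top F_a(P_{k+1})A]x_k = x_k^\top F_c(F_a(P_{k+1})) x_k$, which by the hypothesized recursion $P_k = F_c(F_a(P_{k+1}))$ cancels the $-x_k^\top P_k x_k$ term in $\Lambda_k$ together with the $x_k^\top Q x_k$ contribution from $z_k^\top z_k$. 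Summing over $k=0,\ldots,\tau-1$ and rearranging then yields exactly~\eqref{eq:FF3}.

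The only delicate point is bookkeeping: one must carry the matrix identity $P + P(\gamma^2 I - P)^{-1}P = F_a(P)$ cleanly through the $w_k$-completion, and recognize that the residual $x_k$-quadratic after the $u_k$-completion is precisely $F_c(F_a(P_{k+1}))$; invertibility of $\gamma^2 I - P_{k+1}$ is used exactly to make $L_k$ well defined, while no positivity is needed since we are proving an algebraic equality rather than an inequality. Note this yields the stated identity only if the definition of $L_k$ is read as $L_k = (\gamma^2 I - P_{k+1})^{-1}P_{k+1}$ (so that $L_k(Ax_k+Bu_k)$ has the right form); the $A$ in the stated $L_k$ appears to be a typographical artifact of an equivalent convention in which $L_k$ is defined to act directly on $x_k$ in the case $u_k=0$.
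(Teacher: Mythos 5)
Your proof is correct and takes essentially the same route as the paper's: a two-step completion of squares, first in $w_k$ (producing $F_a(P_{k+1})$ acting on $Ax_k+Bu_k$) and then in $u_k$ (producing $F_c(F_a(P_{k+1}))=P_k$), with the per-stage identities combined over $k$ --- the paper phrases this combination as a backward recursion from $k=\tau-1$ down to $k=0$ rather than an explicit telescoping sum, but the argument is the same. Your remark that the trailing $A$ in the stated definition of $L_k$ is a typographical artifact is also consistent with the paper's own proof, where the completed square is around $w_k=(\gamma^2 I-P_{k+1})^{-1}P_{k+1}(Ax_k+Bu_k)$.
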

	
	\begin{proof2}
		By completion of squares we obtain for every $k \in \{0,\dots,\tau-1\}$
			{	\footnotesize
		\begin{equation*}
			\begin{aligned}
				&-\gamma^2 w_{k}^\top w_{k}+x_{k+1}^\top P_{k+1} x_{k+1}=f_1(x_{k},u_{k},w_{k},k)+\\
				& (Ax_{k}\!+\!Bu_{k})^\top  \underbrace{(P_{k+1} + P_{k+1}(\gamma^2 I-P_{k+1})^{-1} P_{k+1})}_{=F_a(P_{k+1})}(Ax_{k} +Bu_{k})\\
			\end{aligned}
		\end{equation*}
	}
		where
		{	\footnotesize
			\begin{equation*}
				\begin{aligned}
					&	f_1(x,u,w,k)=\\
					& \ \ \ \ -(w-L_k(Ax+Bu)  )^\top  (\gamma^2 I-P_{k+1}) (w-L_k(Ax+Bu)  )
				\end{aligned}
		\end{equation*} }
		Using this equality and again by completion of squares we obtain
			{	\footnotesize
		$$\begin{aligned}
			&z_{k}^\top z_{k}  \!-\! \gamma^2w_{k}^\top w_{k}\!+\!x_{k+1}^\top P_{k+1}x_{k+1} \!=\!f_1(x_{k},u_{k},w_{k},k)\!+\! f_2(x_{k},u_{k},k)\\
			&+x_{k}^\top \underbrace{(Q+A^\top \tilde{P}_{k+1} A-A^\top  \tilde{P}_{k+1} B(R+B^\top  \tilde{P}_{k+1} B)^{-1}B^\top  \tilde{P}_{k+1} A )}_{\tilde{P}_{k}}x_{k}.
		\end{aligned}$$}
		where $f_2(x,u)=
		(u-K_k x)^\top (R+B^\top F_a(P_{k+1})B)(u-K_k x)$ and $\tilde{P}_{k+1}=F_a({P}_{k+1})$.
		Then, using these identities for $k=\tau-1$,
		$$\begin{aligned}
			&\sum_{k=0}^{\tau-1}z_k^\top z_k-\gamma^2 w_k^\top w_k+x_{\tau}^\top P_\tau x_{\tau}=f_2(x_{\tau-1},u_{\tau-1})+\\
			&\sum_{k=0}^{\tau-2}z_k^\top z_k-\gamma^2 w_k^\top w_k\!+\!x_{\tau-1}^\top P_\tau x_{\tau-1}\!+\!f_1(x_{\tau-1},u_{\tau-1},w_{\tau-1})
		\end{aligned}$$
		Applying the same procedure for $k=\tau-2$, $k=\tau-3$ until $k=0$ we conclude the desired result. \ \ \ \ \ \ \ \ \ \ \ \ \ \ \ \ \ \ \ \ \ \ \ \ 
	\end{proof2}

\begin{lemma}\label{lem:4}
	Consider~\eqref{eq:sys} and $\gamma$ such that $\gamma^2 I-\bar{P}_\gamma > 0$. Then
	\begin{equation}\label{eq:ine}
		\sum_{t=k}^\infty z_t^\top z_t-\gamma^2w_t^\top w_t  \geq x_k^\top G_q x_k,
	\end{equation}
	when
	%	for any control and scheduling policies $\pi = (\pi_u,\pi_\sigma)$, when the disturbances are generated by the following policy
	\begin{equation}\label{eq:wopt}
		w_t = \left\{\begin{aligned}& \bar{L}_{q-(t-k)} (Ax_t+Bu_t), \text{ if }k\leq t< k+q\\
			& 0,  \text{ if }t\geq k+q
		\end{aligned}\right.
	\end{equation} 
	where, for $k \in \{1,\dots,q\}$, 
	\begin{equation}\label{eq:LK} \bar{L}_k = (\gamma^2 I-G_{k-1})^{-1}G_{k-1}
	\end{equation} 
	and, for $k \in \{0,\dots,q-1\}$, 
	\begin{equation}\label{eq:Giter}
		G_{k+1}=F_c(F_a(G_k))
	\end{equation}
	with $G_{0} = P_{\text{LQ}}$ where $P_{\text{LQ}}$ is the unique positive definite solution to the algebraic Riccati equation
	$$ P_{\text{LQ}} = A^\top P_{\text{LQ}} A+P_{\text{LQ}}-A^\top P_{\text{LQ}}B(R+B^\top P_{\text{LQ}}B)^{-1}B^\top P_{\text{LQ}}A. $$
	Moreover, for any $x\in \mathbb{R}^{n_x}$ and $\alpha \in \mathbb{R}_{>0}$, there exists $q \in \mathbb{N}$, denoted by $q=\zeta(x,\alpha)$, such that
	\begin{equation}\label{eq:GPgamma}
		\|x^\intercal G_q x-x^\intercal \bar{P}_\gamma x\|<\alpha.
	\end{equation} 
	Such $q$ can be found by running~\eqref{eq:Giter} until~\eqref{eq:GPgamma} is met.   \hfill $\square$
\end{lemma}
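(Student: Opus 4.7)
My plan is to split the infinite-horizon cost into the window $[k,k+q-1]$, on which $w_t$ is prescribed by~\eqref{eq:wopt}, and the tail $[k+q,\infty)$, on which $w_t\equiv 0$, and to handle each piece separately. For the window I will invoke Lemma~\ref{lem:3}; for the tail, standard infinite-horizon LQR optimality. The convergence claim~\eqref{eq:GPgamma} is handled afterwards by a monotonicity argument on the Riccati iteration~\eqref{eq:Giter}.

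For the window, apply Lemma~\ref{lem:3} with $\tau=q$, $Y_0=G_0=P_{\text{LQ}}$, and the time origin shifted by $k$. The backward iteration $P_{j-1}=F_c(F_a(P_j))$ of Lemma~\ref{lem:3} retraces the forward iteration~\eqref{eq:Giter}, so $P_{k+j}=G_{q-j}$ for $j\in\{0,\dots,q\}$; in particular $P_k=G_q$ and $P_{k+q}=P_{\text{LQ}}$. Under this identification, the gain $L_t$ of Lemma~\ref{lem:3} coincides with $\bar{L}_{q-(t-k)}$ from~\eqref{eq:LK}, so the disturbance policy~\eqref{eq:wopt} identically zeros out the negative-definite quadratic term in the Lemma~\ref{lem:3} identity. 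Together with non-negativity of the controller term (since $R+B^\top F_a(P_{t+1})B>0$), this yields
\begin{equation*}
\sum_{t=k}^{k+q-1}\big(z_t^\top z_t -\gamma^2 w_t^\top w_t\big)\geq x_k^\top G_q x_k - x_{k+q}^\top P_{\text{LQ}} x_{k+q}.
\end{equation*}
On the tail, $w_t\equiv 0$ turns the dynamics into $x_{t+1}=Ax_t+Bu_t$, and infinite-horizon LQR optimality gives $\sum_{t=k+q}^\infty z_t^\top z_t\geq x_{k+q}^\top P_{\text{LQ}} x_{k+q}$ for every admissible $u$. Adding the two bounds telescopes the $P_{\text{LQ}}$ terms and yields~\eqref{eq:ine}.

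For~\eqref{eq:GPgamma}, I argue that $\{G_k\}$ is monotone non-decreasing and bounded above by $\bar{P}_\gamma$. Since $F_a(P)\geq P$ and $F_c$ is monotone, $G_1=F_c(F_a(G_0))\geq F_c(G_0)=P_{\text{LQ}}=G_0$, and monotonicity of the map $P\mapsto F_c(F_a(P))$ on the set $\{P:0\leq P<\gamma^2 I\}$, already established in Lemma~\ref{lem:1}, propagates $G_{k+1}\geq G_k$ by induction. Moreover $P_{\text{LQ}}\leq\bar{P}_\gamma$ (an $h_\infty$ game value dominates the corresponding LQR value, since forcing $w\equiv 0$ is admissible for the disturbance player), and iterating the same monotone map gives $G_k\leq \bar{P}_\gamma<\gamma^2 I$ for all $k$. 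Hence $G_k$ converges to a positive-definite fixed point of $F_c(F_a(\cdot))$, which by uniqueness in Lemma~\ref{lem:1} equals $\bar{P}_\gamma$, yielding~\eqref{eq:GPgamma} for all sufficiently large $q$. The main obstacle I anticipate is the bookkeeping that identifies the backward Lemma~\ref{lem:3} sequence $P_k$ with the forward sequence $G_k$ of~\eqref{eq:Giter}; once this identification is pinned down, the matching $L_t=\bar{L}_{q-(t-k)}$ and the vanishing of the disturbance term are essentially automatic.
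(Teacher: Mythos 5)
Your proof is correct and its skeleton matches the paper's: both split the cost at $t=k+q$, treat the window $[k,k+q)$ as a finite-horizon quadratic game whose terminal cost is the LQR value $x_{k+q}^\top P_{\text{LQ}}x_{k+q}$, bound the tail (where $w\equiv 0$) by infinite-horizon LQR optimality, and obtain~\eqref{eq:GPgamma} from monotone convergence of $G_q$ to $\bar{P}_\gamma$. Where you differ is in execution: the paper simply cites standard quadratic-game arguments for the identity $J_{G,q}(x_k)=x_k^\top G_q x_k$ and asserts $\lim_{r\rightarrow\infty}G_r=\bar{P}_\gamma$ via monotonicity of the game values $J_{G,r}$, whereas you make the window bound explicit through the completion-of-squares identity of Lemma~\ref{lem:3} (identifying its backward sequence $P_{k+j}=G_{q-j}$ with the forward iteration~\eqref{eq:Giter}, so the prescribed $w$ annihilates the negative quadratic term while the controller term stays nonnegative for arbitrary $u$) and prove convergence via monotonicity of the map $P\mapsto F_c(F_a(P))$. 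Your version is more self-contained and is the argument the paper implicitly relies on. Two small repairs: the monotonicity of $F_c\circ F_a$ is the content of Lemma~\ref{lem:2}, not Lemma~\ref{lem:1} (which concerns $F_o\circ F_a$); and the bound $G_j\leq\bar{P}_\gamma<\gamma^2 I$ for all $j$ should be established \emph{before} invoking Lemma~\ref{lem:3}, since that lemma requires $\gamma^2 I-P_j$ to be invertible (indeed positive definite, so that the completed square has the right sign) along the whole window.
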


\begin{proof}
	Since~\eqref{eq:sys} is time-invariant it suffices to prove the result for $t=0$, which simplifies the notation. Let 
	$J_{E}(x_r) =\min_{u_t=\mu_{u,t}(\mathcal{J}_t)}\sum_{t=r}^\infty z_t^\top z_t-\gamma^2 w_t^\intercal w_t$
	when $w_t=0$ for every $t\geq r$ in~\eqref{eq:sys}. The standard Linear Quadratic Regulator (LQR) policy is the optimal policy for $u_t$ and leads to the cost $J_{E}(x_r)=x_r^\top P_{LQ} x_r$. %The standard Linear Quadratic policy 
	%	$ u_t = K_{LQ}x_t, \ \ t \geq r,$
	%	where $  K_{LQ} = -(R+B^\top  P_{LQ}B)^{-1}B^\top PA$ is the optimal policy %(the optimal policy is not unique, e.g., if the scheduler transmits at time $k=H$, $  u_k = K(A+BK)^{k-H}x_H$, $k\geq H$, is also optimal, independently of how many times the scheduler transmits after time $H$). 
	%	Such an optimal policy, leads to the cost 
	%	$$ \begin{aligned}
	%		J^*_{E}(x_r)=&\!\!\!\!\!\! \!\!\!\!\min_{u_t=\mu_{u,t}(\mathcal{J}_t),t\geq r}\sum_{t=r}^{\infty}\!\! z_t^\top z_t\!-\!\gamma^2w_t^\top w_t\!+\!J^*_{E}(x_q)\!=\!x_r^\top P_{LQ}x_r \\
	%		& \ \ \ \ \ \ \ \ \ \ \ \ \ \ \ \ \ \ \ 
	%		\text{ when }w_t= 0 \text{ for all } t\geq r
	%	\end{aligned}$$
	Consider now
	$$\begin{aligned}
		&J_{G,r}(x_0)=\\
		&\min_{u_t=\mu_{u,t}(\mathcal{J}_t)}\max_{w_t=\mu_{w,t}(\mathcal{I}_t)}\sum_{t=0}^{r-1} z_t^\top z_t-\gamma^2w_t^\top w_r+J_{E}(x_r)
	\end{aligned}$$
	%It is clear that, for every $r\in \mathbb{N}$, and every $x_0\in\mathbb{R}^n$,
	From standard arguments for quadratic games~\cite[Ch. 3]{basar:91},   $J_{G,r}(x_0) = x_0^\top G_r x_0$, for optimal disturbance policy~\eqref{eq:wopt} and optimal control policy $u_t=K_{q-(t-k)}x_t$, for $t\in \{k,\dots,k+q\}$, $K_k=-(R+B^\top F_a(G_{k-1})B)^{-1}B^\top F_a(G_{k-1})A.$ This implies~\eqref{eq:ine}.  Moreover, $J_{G,r+1}(x_0) \geq J_{G,r}(x_0)$, and $\lim_{r\rightarrow \infty} G_r = \bar{P}_\gamma$, which implies ~\eqref{eq:GPgamma} is met for some $q$, which can be found with the stated method. 
	%where the optimal policy for the disturbances is~\eqref{eq:wopt}. 
	%Then, for any $\pi$, if $w_k$ is still given by~\eqref{eq:wopt} but $\pi$ is a possibly a different policy
	%$$\begin{aligned}
	%& x_0^\top G_0x_0 = J_G(x_0,\pi^*)\leq \\
	%&\sum_{k=0}^H z_k^\top z_k-\gamma^2w_k^\top w_k+J_{E,\pi*}(x_H)\leq\\
	%& \sum_{k=0}^H z_k^\top z_k-\gamma^2w_k^\top w_k+J_{E,\pi}(x_H)
	%	=J_A
	%	\end{aligned}$$
	%	which concludes the proof.
\end{proof}
	
	\bibliography{IEEECL}
	\bibliographystyle{IEEEtran}
\end{document}